\def\gam{\gamma }
\def\lam{\lambda }
\def\gam{\gamma }
\def\RR{\mathbb R}
\def\ZZ{\mathbb Z}
\newcommand{\set}[1]{\left\{#1\right\}}%set
\providecommand{\abs}[1]{\left\lvert#1\right\rvert}
\providecommand{\norm}[1]{\left\lVert#1\right\rVert}
\DeclareMathOperator{\dist}{dist}
\DeclareMathOperator{\vect}{Vect}
\newtheorem{theorem}{Theorem}[section]
\newtheorem{proposition}[theorem]{Proposition}
\newtheorem{lemma}[theorem]{Lemma}
\newtheorem{corollary}[theorem]{Corollary}
\newtheorem*{remark}{Remark}
\newtheorem*{remarks}{Remarks}
\newtheorem*{definition}{Definition}
\numberwithin{equation}{section}
\newcommand{\remove}[1]{ }
\begin{document}
\title[Observability on small sets]{Observability of rectangular membranes and plates on small sets} 
\author{Vilmos Komornik} 
\address{Département de mathématique\\
         Université de Strasbourg\\
         7 rue René Descartes\\
         67084 Strasbourg Cedex, France}
\email{komornik@math.unistra.fr}
\author{Paola Loreti}
\address{Sapienza Università di Roma,
Dipartimento di Scienze di Base
e Applicate per l'Ingegne\-ria,
Sezione di Matematica,
via A. Scarpa n. 16,
00161 Roma, Italy}
\email{loreti@dmmm.uniroma1.it}
\thanks{Part of this work was done during the visit of the second author to the Department of Mathematics of the University of Strasbourg in January 2013. The visit was partially supported by INdAM, GDRE ConEDP Project.}
\subjclass{42C99, 93B07}
\keywords{Observability, Fourier series, Diophantine approximation, plate models}
\date{Version of 2013-08-21-a}
%\dedicatory{}

\begin{abstract}
Since the works of Haraux and Jaffard we know that rectangular plates may be observed by subregions not satisfying the geometrical control condition. We improve these results by observing only on an arbitrarily short segment inside the domain. The estimates may be strengthened by observing on several well-chosen segments.

In the second part of the paper we establish various observability theorems for rectangular membranes by applying Mehrenberger's recent generalization of Ingham's theorem.
\end{abstract}
\maketitle

\section{Introduction and statement of the main result}\label{s1}

Given an open rectangle
\begin{equation*}
\Omega=(0,\ell_1)\times (0,\ell_2)
\end{equation*}
with boundary $\Gamma$, we consider solutions of the following system:
\begin{equation}\label{11}
\begin{cases}
u''+\Delta^2 u=0\quad &\text{in}\quad \RR\times\Omega,\\
u=\Delta u=0 &\text{on}\quad\RR\times\Gamma,\\
u(0)=u_0 &\text{in}\quad\Omega,\\
u'(0)=u_1 &\text{in}\quad\Omega.
\end{cases}
\end{equation}
They describe the small transversal vibrations of a hinged plate. We are interested in the observability of this system on small subsets of $\Omega$. 

In order to recall two results, it is convenient to introduce the Hilbert spaces $D^s$ for $s\in\RR$ as follows. We fix an orthonormal basis $(e_k)$ in $L^2(\Omega)$, formed by eigenfunctions of $-\Delta$ in $H^1_0(\Omega)$ with corresponding eigenvalues $\lam_k>0$, and we denote by $D^s$ the completion of $\vect\set{e_k}$ with respect to the Euclidean norm
\begin{equation*}
\norm{\sum_{k=1}^{\infty}c_ke_k}_s:=\left( \sum_{k=1}^{\infty}\lam_k^{s}\abs{c_k}^2\right)^{1/2}.
\end{equation*}
Identifying $L^2(\Omega)$ with its dual, we have in particular
\begin{multline*}
D^0=L^2(\Omega),\quad D^1=H^1_0(\Omega),\quad D^{-1}=H^{-1}(\Omega)\\
\quad\text{and}\quad 
D^2=H^2(\Omega)\cap H^1_0(\Omega)
\end{multline*}
(with equivalent norms).

\begin{figure}
\centering
\begin{minipage}[b]{0.4\textwidth}
\scalebox{0.8}{\begin{tikzpicture}
\draw[thick] (0,0)--(0,4)--(6,4)--(6,0)--(0,0);
\draw[ultra thick,red] (1,0)--(1,4);
\draw[ultra thick,red] (2,0)--(2,4);
\draw[ultra thick,red] (3,0)--(3,4);
\node [below left] at (0,0) {$0$};
\node [below] at (1,0) {$\alpha_1$};
\node [below] at (2,0) {$\alpha_2$};
\node [below] at (3,0) {$\alpha_3$};
\node [below right] at (6,0) {$\ell_1$};
\node [above left] at (0,4) {$\ell_2$};
\end{tikzpicture}}
\caption{}%\ref{t12}
\label{f2}
\end{minipage}%
\hspace{0.04\textwidth}%
\begin{minipage}[b]{0.4\textwidth}
\scalebox{0.8}{\begin{tikzpicture}
\draw[thick] (0,0)--(0,4)--(6,4)--(6,0)--(0,0);
\draw[ultra thick,red] (1,2.5)--(1,3);
\draw[ultra thick,red] (2,0.5)--(2,1);
\draw[ultra thick,red] (3,2)--(3,3);
\node [below left] at (0,0) {$0$};
\node [below] at (1,0) {$\alpha_1$};
\node [below] at (2,0) {$\alpha_2$};
\node [below] at (3,0) {$\alpha_3$};
\node [below right] at (6,0) {$\ell_1$};
\node [above left] at (0,4) {$\ell_2$};
\end{tikzpicture}}
\caption{}%\ref{t13}
\label{f3}
\end{minipage}
\end{figure}

Improving a theorem of Haraux \cite{Har1989}, Jaffard \cite{Jaf1988}, \cite{Jaf1990} proved the following
\begin{theorem}\label{t11}
If $\omega$ is an arbitrary non-empty open subset of $\Omega$ and $T>0$ an arbitrarily small number, then  the solutions of \eqref{11} satisfy the estimates 
\begin{equation}\label{12}
\norm{u_0}_{0}^2+\norm{u_1}_{-2}^2\asymp \int_0^T\int_{\omega}\abs{u}^2\ dx\ dt
\end{equation}
for all $(u_0,u_1)\in D^0\times D^{-2}$.
\end{theorem}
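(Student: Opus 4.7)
The plan is to prove Theorem \ref{t11} by combining a Fourier series expansion of the solutions of \eqref{11} with a generalized Ingham-type inequality adapted to the time-frequencies of the plate equation. The reason one can observe on arbitrarily small open sets, unlike for the wave equation, is that these frequencies are the eigenvalues $\lam_{m,n}=(m\pi/\ell_1)^2+(n\pi/\ell_2)^2$ themselves (not their square roots): they grow quadratically and admit a clustering structure that makes gap-type estimates available even when $\omega$ fails the geometric control condition.

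First I would expand
\begin{equation*}
u(t,x)=\sum_{k\in\ZZ^2_*}c_k e^{i\lam_k t}e_k(x),
\end{equation*}
with the convention $\lam_{-k}=-\lam_k$ and $e_{-k}=\overline{e_k}$, and check the identification $\norm{u_0}_{0}^2+\norm{u_1}_{-2}^2\asymp \sum\abs{c_k}^2$ by computing both sides in the orthonormal basis $(e_k)$ and using that the $D^{-2}$ weight compensates the factor $\lam_k^2$ coming from the time derivative in $u'$. Since $\omega$ is nonempty and open, it contains a rectangle $R=(a_1,b_1)\times(a_2,b_2)$, and establishing \eqref{12} with $R$ in place of $\omega$ is only stronger. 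On such an $R$ the eigenfunctions $e_{m,n}(x)=\sin(m\pi x_1/\ell_1)\sin(n\pi x_2/\ell_2)$ retain their product structure, so the space integrals $\int_R e_{m,n}\overline{e_{m',n'}}\,dx$ factorize into a pair of one-dimensional sine integrals that one controls explicitly.

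The crux, and main obstacle, is to establish a two-sided estimate of the form
\begin{equation*}
\sum\abs{c_k}^2\asymp\int_0^T\int_R\Bigl\vert\sum c_k e^{i\lam_k t}e_k(x)\Bigr\vert^2\,dx\,dt
\end{equation*}
for an arbitrarily small $T>0$. This is a vector-valued Ingham-type inequality for a family of frequencies that does \emph{not} satisfy any uniform gap condition, so the classical Ingham theorem cannot be invoked directly. Following Jaffard, I would partition the spectrum $\set{\lam_{m,n}}$ into finite clusters whose inter-cluster separation is uniformly bounded from below, apply Ingham's inequality between clusters, and within each cluster exploit the quantitative linear independence in $L^2(R)$ of the corresponding eigenfunctions. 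The latter rests on Diophantine-type bounds on the number of lattice points $(m,n)$ such that $\lam_{m,n}$ falls into a small window, combined with the explicit sinusoidal form of the $e_{m,n}$. Both inequalities in \eqref{12} then follow, the upper bound being the easier direction and the lower bound relying on the full strength of the generalized Ingham estimate.
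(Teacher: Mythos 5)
The paper does not prove Theorem \ref{t11}: it is recalled from Haraux and Jaffard without proof, and the closest machinery the paper itself uses is Theorem \ref{t21} (Tenenbaum--Tucsnak), a genuinely multi-dimensional Ingham-type inequality for the frequencies $(zk_2,\abs{k}^2)$. Your preliminary steps (series expansion, the identification $\norm{u_0}_0^2+\norm{u_1}_{-2}^2\asymp\sum\abs{c_k}^2$, reduction to a subrectangle $R\subset\omega$) are correct and routine. The crux, however, is handled by a strategy that fails on two counts. First, the small observation time: applying Ingham's inequality ``between clusters'' whose mutual separation is merely bounded below by some $\gamma>0$ yields observability only for $T>2\pi/\gamma$, not for arbitrarily small $T$. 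To get arbitrarily small $T$ from a purely time-variable argument you would need the inter-cluster gaps to tend to infinity (Kahane's theorem), and this is impossible here: for the square, consecutive distinct eigenvalues $m^2+n^2$ have bounded gaps infinitely often (e.g.\ $k^2$ and $k^2+1$ are both sums of two squares), so no partition of the spectrum into well-separated clusters with gaps tending to infinity exists. The spatial oscillation must enter in an essentially multi-dimensional way; this is exactly what the Tenenbaum--Tucsnak theorem encodes, and its proof rests on lattice-point estimates for the frequency set sitting on a paraboloid, not on clustering plus one-dimensional Ingham.

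Second, the within-cluster step. The clusters you would need (lattice points $(m,n)$ with $\lambda_{m,n}$ in a window of bounded length) have unbounded cardinality, since the number of representations of an integer as a sum of two squares is unbounded. Your appeal to ``quantitative linear independence in $L^2(R)$ of the corresponding eigenfunctions'' therefore requires lower frame bounds for arbitrarily large families $\set{\sin(m_jx_1/\ell_1\cdot\pi)\sin(n_jx_2/\ell_2\cdot\pi)}$ restricted to a fixed, arbitrarily small rectangle, with constants uniform over all clusters. That statement is precisely the hard analytic and arithmetic content of Jaffard's theorem; the lattice-point counts you invoke control the cluster sizes only on average (the maximal size grows without bound), and they do not by themselves yield the uniform Gram-matrix bound your argument needs. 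In short, the architecture is reasonable but the two key estimates at the heart of the proof are asserted rather than obtained, and one of them (small $T$ via Ingham between clusters with gaps bounded below) is not merely unproved but unobtainable by the proposed mechanism.
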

\remove{See Figure \ref{f1}.}

Here and in the sequel the notation $A\asymp B$ means that $c_1 A\le B\le c_2A$ with two positive constants $c_1, c_2$ which do not depend on the particular initial data or on the particular choice of some parameters $a_k$.

Next we recall a theorem from \cite{KomLor2011-152}, where we observe on a finite number of translates of the vertical side of $\Omega$ (see Figure \ref{f2}):

\begin{theorem}\label{t12}
Choose finitely many points $\alpha_{1},\ldots, \alpha_{M}$ in $(0,\ell_1)$ such that
\begin{equation*}
\alpha_{1},\ldots, \alpha_{M}, \ell_1
\end{equation*}
are linearly independent over the field  of rational numbers and that they belong to  a real algebraic field of degree $M+1$. 

Then for every positive number $T$ there exists a constant $c$ such that, setting $s:=-1/M$ for brevity, the solutions of \eqref{11}
satisfy the estimates
\begin{equation}\label{13}
\norm{u_0}_{s}^2+\norm{u_1}_{s-2}^2 \le c\sum_{j=1}^{M}\int_{0}^{T}\int_{0}^{\ell_2} \abs{u(t,\alpha_j,x_2)}^{2}\ dx_2\ dt
\end{equation}
for all $(u_0,u_1)\in D^1\times D^{-1}$.
\end{theorem}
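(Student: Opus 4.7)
The proof will combine three ingredients: a Fourier expansion of the solutions of \eqref{11}, a Kahane-type generalization of Ingham's inequality applied mode-by-mode in the transverse variable, and a simultaneous Diophantine estimate that exploits the algebraic hypothesis on $\alpha_1,\ldots,\alpha_M$. First I would write
\begin{equation*}
u(t,x_1,x_2)=\sum_{m,n\ge 1}\sin\tfrac{m\pi x_1}{\ell_1}\sin\tfrac{n\pi x_2}{\ell_2}\bigl(c^+_{m,n}e^{i\lam_{m,n}t}+c^-_{m,n}e^{-i\lam_{m,n}t}\bigr),
\end{equation*}
with $\lam_{m,n}=(m\pi/\ell_1)^2+(n\pi/\ell_2)^2$; Parseval then gives $\norm{u_0}_s^2+\norm{u_1}_{s-2}^2\asymp\sum_{m,n}\lam_{m,n}^{s}(\abs{c^+_{m,n}}^2+\abs{c^-_{m,n}}^2)$ for any $s\in\RR$.

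The next step is to substitute $x_1=\alpha_j$ and apply orthogonality of $\{\sin(n\pi x_2/\ell_2)\}_n$ in $L^2(0,\ell_2)$ to decouple the $n$-modes. For fixed $n$ the exponents $\pm\lam_{m,n}$ ($m\ge 1$) have consecutive gaps $\lam_{m+1,n}-\lam_{m,n}=(2m+1)\pi^2/\ell_1^2$, which are bounded below by $3\pi^2/\ell_1^2$ uniformly in $n$ and tend to infinity as $m\to\infty$; Kahane's generalization of Ingham's theorem will therefore apply on any interval $(0,T)$ with constants independent of $n$ and $j$. After summing over $j$ and $n$ this yields
\begin{equation*}
\sum_{j=1}^{M}\int_0^T\!\!\int_0^{\ell_2}\abs{u(t,\alpha_j,x_2)}^2\,dx_2\,dt \asymp \sum_{m,n\ge 1}\bigl(\abs{c^+_{m,n}}^2+\abs{c^-_{m,n}}^2\bigr)\sum_{j=1}^{M}\sin^2\tfrac{m\pi\alpha_j}{\ell_1}.
\end{equation*}

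Comparing this with the Parseval identity for $\norm{u_0}_s^2+\norm{u_1}_{s-2}^2$, it remains to prove the pointwise weight estimate $\sum_{j=1}^{M}\sin^2(m\pi\alpha_j/\ell_1)\ge c_0\,\lam_{m,n}^{-1/M}$ for all $m,n\ge 1$. Since $\lam_{m,n}^{-1/M}\le(m\pi/\ell_1)^{-2/M}$ and $\abs{\sin\pi y}\ge 2\dist(y,\ZZ)$, this will follow from the simultaneous Diophantine inequality
\begin{equation*}
\max_{1\le j\le M}\dist\bigl(m\alpha_j/\ell_1,\,\ZZ\bigr)\ge c_1\,m^{-1/M}\qquad(m\ge 1).
\end{equation*}
This is exactly where the algebraic hypothesis enters: when $1,\alpha_1/\ell_1,\ldots,\alpha_M/\ell_1$ are $\QQ$-linearly independent elements of a real algebraic number field of degree $M+1$, a classical Peck-type theorem of Diophantine approximation (of the sort already exploited by Jaffard \cite{Jaf1990}) delivers exactly such a bound, the exponent $1/M$ being sharp. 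This Diophantine step is expected to be the principal obstacle; the preceding Fourier--Parseval--Kahane chain is by contrast standard.
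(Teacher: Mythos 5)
Your argument is correct, and its second half (the reduction to the weight estimate $\sum_{j}\sin^2(m\pi\alpha_j/\ell_1)\ge c\,\lam_{m,n}^{-1/M}$ via $\abs{\sin\pi y}\ge 2\dist(y,\ZZ)$ and the simultaneous approximation theorem for algebraic numbers) is exactly what the paper does: your ``Peck-type theorem'' is the paper's Theorem \ref{t24}, quoted from Cassels, applied to $\theta_j=\alpha_j/\ell_1$ (note that the hypothesis is stated for $\alpha_1,\dots,\alpha_M,\ell_1$, but dividing by $\ell_1$ stays inside the same algebraic field, so your normalization is legitimate). Where you genuinely diverge is in the harmonic-analysis step. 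Since Theorem \ref{t12} observes on the \emph{full} cross-sections $\set{\alpha_j}\times(0,\ell_2)$, you can decouple the $x_2$-modes by orthogonality and then apply a one-dimensional Kahane--Ingham theorem in $t$ for each fixed $n$, exploiting that the gaps $(2m+1)\pi^2/\ell_1^2$ tend to infinity so that any $T>0$ works. The paper instead treats $(t,x_2)$ jointly and invokes the two-dimensional Ingham-type theorem of Tenenbaum and Tucsnak (Theorem \ref{t21}, extended via Proposition \ref{p22}), precisely because its goal is the stronger Theorem \ref{t13}, where $(0,\ell_2)$ is replaced by an arbitrary small subinterval $I_j$ and orthogonality is no longer available; Theorem \ref{t12} then falls out as the special case $I_j=(0,\ell_2)$. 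So your route is more elementary and self-contained for the statement at hand, but it does not generalize to small segments, which is the whole point of the paper's machinery. One point you should not wave away: the Kahane constants must be \emph{uniform in $n$}. This does hold here --- the minimal gap of the full exponent set $\set{\pm\lam_{m,n}:m\ge1}$ is bounded below by $\min\set{3\pi^2/\ell_1^2,\,2\pi^2(\ell_1^{-2}+\ell_2^{-2})}$ independently of $n$, the separation $2\lam_{1,n}$ between the positive and negative branches only grows with $n$, and for any threshold only a bounded number (independent of $n$) of consecutive gaps fall below it --- but a complete write-up must say so, since for each $n$ the exponent set is a different translate and the Haraux/Kahane constants a priori depend on it.
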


Here and in the sequel the letter $c$ denotes diverse positive constants which do not depend on the particular initial data or on the particular choice of some parameters $a_k$.

Observe that when $M\to\infty$, then the norm on the left side converges to the left side of \eqref{12}.

The first result of the present paper is the following improvement of Theorem \ref{12} where we observe only on a finite number of small segments (see Figure \ref{f3}): 

\begin{theorem}\label{t13}
Let $\alpha_{1},\ldots, \alpha_{M}$ be as in Theorem \ref{t12} and set $s:=-1/M$ again. Furthermore, fix $M$ arbitrary non-empty open subintervals $I_1,\ldots, I_M$ of $(0,\ell_2)$.

Then for every positive number $T$ there exists a constant $c$ such that the solutions of  \eqref{11} satisfy the estimates
\begin{equation}\label{14}
\norm{u_0}_{s}^2+\norm{u_1}_{s-2}^2\le c\sum_{j=1}^{M}\int_{0}^{T}\int_{I_j} \abs{u(t,\alpha_j,x_2)}^{2}\ dx_2\ dt
\end{equation}
for all $(u_0,u_1)\in D^1\times D^{-1}$.
\end{theorem}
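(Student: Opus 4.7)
The plan is to adapt the Fourier-analytic approach used to prove Theorem \ref{t12}, replacing the scalar Ingham-type estimate in the time variable by a vector-valued one taking values in $L^2(I_j)$. Expanding the solution in the sine basis,
\begin{equation*}
u(t,x)=\sum_{m,n\ge 1}\bigl(a_{m,n}e^{i\lam_{m,n}t}+b_{m,n}e^{-i\lam_{m,n}t}\bigr)\sin\tfrac{m\pi x_1}{\ell_1}\sin\tfrac{n\pi x_2}{\ell_2},
\end{equation*}
with $\lam_{m,n}=(m\pi/\ell_1)^2+(n\pi/\ell_2)^2$, the left-hand side of \eqref{14} is equivalent to $\sum_{m,n}\lam_{m,n}^{s}(\abs{a_{m,n}}^2+\abs{b_{m,n}}^2)$, while the observed trace $v_j(t,x_2):=u(t,\alpha_j,x_2)$ is an exponential sum in $t$ whose coefficients are the elements $\sin(m\pi\alpha_j/\ell_1)\sin(n\pi x_2/\ell_2)\in L^2(I_j)$.

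The first main step is to apply a vector-valued version of the weighted Ingham-type theorem underlying Theorem \ref{t12}: expanding $v_j(t,\cdot)$ in a Hilbert basis of $L^2(I_j)$ and applying the scalar theorem componentwise yields
\begin{equation*}
\int_0^T\int_{I_j}\abs{v_j(t,x_2)}^2\,dx_2\,dt\ge c\sum_{m,n}\lam_{m,n}^{s}\bigl(\abs{a_{m,n}}^2+\abs{b_{m,n}}^2\bigr)\sin^2\tfrac{m\pi\alpha_j}{\ell_1}\int_{I_j}\sin^2\tfrac{n\pi x_2}{\ell_2}\,dx_2,
\end{equation*}
with the same weights as in the scalar case. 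The second step uses the elementary uniform positivity
\begin{equation*}
\int_{I_j}\sin^2(n\pi x_2/\ell_2)\,dx_2\ge c_{I_j}>0\qquad(n\ge 1),
\end{equation*}
which follows from Riemann-Lebesgue as $n\to\infty$ and from the openness of $I_j$ for each of the finitely many remaining $n$, so the $x_2$-integrals may be dropped. Once this is done, the right-hand side is, up to constants, exactly the expression produced after the Ingham step in the proof of Theorem \ref{t12}; summing over $j=1,\ldots,M$, the Diophantine hypothesis on $\alpha_1,\ldots,\alpha_M,\ell_1$ is invoked exactly as there to bound this sum below by $\norm{u_0}_{s}^2+\norm{u_1}_{s-2}^2$, yielding \eqref{14}.

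The main obstacle is the vector-valued Ingham step: the frequencies $\{\lam_{m,n}\}$ do not satisfy a uniform gap condition, and when $\lam_{m,n}\approx\lam_{m',n'}$ for $(m,n)\neq(m',n')$ the spatial modes $\sin(n\pi x_2/\ell_2)$ and $\sin(n'\pi x_2/\ell_2)$ are in general not orthogonal on the small interval $I_j$, so the near-resonant contributions must be grouped into clusters and treated as single vector-valued coefficients in the spirit of the generalized Ingham theorems of Baiocchi-Komornik-Loreti, while preserving the weights $\lam_{m,n}^{s}$ required to match those of Theorem \ref{t12}.
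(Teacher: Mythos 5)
Your overall architecture --- restrict to the trace $u(t,\alpha_j,x_2)$, obtain a lower bound carrying the weight $\sin^2(k_1\pi\alpha_j/\ell_1)$, sum over $j$, and invoke Lemma \ref{l23} together with Cassels' theorem (Theorem \ref{t24}) to produce the factor $\lam_k^{-1/M}$ --- is exactly the paper's. But the step you yourself flag as the ``main obstacle'' is the entire content of the theorem beyond Theorem \ref{t12}, and your proposal does not close it. The family of time frequencies $\set{\pm\lam_{m,n}}$ has no uniform gap and its clusters are not of uniformly bounded size, so the scalar Ingham theorem cannot be applied componentwise in a Hilbert basis of $L^2(I_j)$; and because the modes $\sin(n\pi x_2/\ell_2)$ are not orthogonal on a short interval $I_j$, integrating over $I_j$ does not decouple the indices $n$ within a near-resonant cluster. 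The Baiocchi--Komornik--Loreti weakened-gap theorems you invoke would, even where applicable, replace the coefficients by divided differences and so would not deliver the clean decoupled sum $\sum_{m,n}(\abs{a_{m,n}}^2+\abs{b_{m,n}}^2)\sin^2(m\pi\alpha_j/\ell_1)\int_{I_j}\sin^2(n\pi x_2/\ell_2)\,dx_2$ that your argument needs. What the paper uses instead is the theorem of Tenenbaum and Tucsnak (Theorem \ref{t21}): viewing the exponents as the uniformly separated family $(zk_2,\abs{k}^2)$ in $\RR^2$, of Schr\"odinger type, one has the two-sided estimate on an \emph{arbitrary} non-empty bounded open subset of the $(x_2,t)$-plane, in particular on $I_j\times(0,T)$; Proposition \ref{p22} then combines the four sign choices $(\pm zk_2,\pm\abs{k}^2)$. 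This genuinely two-dimensional space-time Ingham inequality is the essential new ingredient, and it cannot be recovered by the one-dimensional componentwise scheme you describe.

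Two smaller points. First, once Theorem \ref{t21} is available there is no need for your lower bound $\int_{I_j}\sin^2(n\pi x_2/\ell_2)\,dx_2\ge c_{I_j}$: the $x_2$-dependence is absorbed into the exponential family itself, and the intermediate estimate reads $\int_0^T\int_{I_j}\abs{u(t,\alpha_j,x_2)}^2\asymp\sum_{k}(\abs{a_k}^2+\abs{b_k}^2)\sin^2(k_1\alpha_j\pi/\ell_1)$ with constant weights. Second, your first displayed inequality already carries the weight $\lam_{m,n}^{s}$ before the Diophantine step; in the proof of Theorem \ref{t12} (and here) that weight is produced only at the very end, from $\sum_{j}\sin^2(k_1\alpha_j\pi/\ell_1)\ge c\,\lam_k^{-1/M}$, so keeping it in the Ingham step and then applying the Diophantine bound again would yield the wrong power of $\lam_k$.
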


In the second half of the paper we are investigating the observability of rectangular membranes, modelised by the following system with the same rectangle $\Omega$ as above:
\begin{equation}\label{15}
\begin{cases}
u''-\Delta u=0\quad &\text{in}\quad \RR\times\Omega,\\
u=0 &\text{on}\quad\RR\times\Gamma,\\
u(0)=u_0 &\text{in}\quad\Omega,\\
u'(0)=u_1 &\text{in}\quad\Omega.
\end{cases}
\end{equation}

We start again by recalling some earlier results. The first one if a very special case of a general theorem of L. F. Ho \cite{Ho1986} and J.-L. Lions, \cite{Lio1988}, obtained by the multiplier method. Setting 
\begin{equation*}
\Gamma_0:=\left[ \set{0}\times (0,\ell_2)\right]\cup \left[ (0,\ell_1)\times\set{0}\right]
\end{equation*}
(see Figure \ref{f4}) we have the

\begin{theorem}\label{t14}
If $T>0$ is sufficiently large, then  the solutions of \eqref{15} satisfy the estimates
\begin{equation}\label{16}
\norm{u_0}_1^2+\norm{u_1}_0^2\asymp \int_0^T\int_{\Gamma_0}\abs{\partial_{\nu}u}^2\ d\Gamma\ dt
\end{equation} for all $(u_0,u_1)\in D^1\times D^0$.
\end{theorem}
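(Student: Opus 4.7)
The plan is to apply the classical multiplier method of Ho–Lions. By density it suffices to treat smooth solutions, for which the energy $E(t) := \tfrac12(\norm{u'(t)}_0^2 + \norm{u(t)}_1^2)$ is conserved; thus $E(t)\equiv E(0)$ and the left-hand side of \eqref{16} equals $2E(0)$.

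I would choose the multiplier $m(x) := x - x_0$ with $x_0 := (\ell_1,\ell_2)$. A direct inspection of the four sides shows that $m\cdot\nu \equiv 0$ on the top and right sides (i.e.\ on $\Gamma\setminus\Gamma_0$), while $m\cdot\nu \ge \min\{\ell_1,\ell_2\} > 0$ on $\Gamma_0$. Multiplying $u''-\Delta u = 0$ by $2m\cdot\nabla u + u$, integrating on $(0,T)\times\Omega$ and using the boundary conditions $u|_\Gamma = 0$, $u'|_\Gamma=0$, and $\nabla u|_\Gamma = (\partial_\nu u)\nu$, together with $\operatorname{div} m = 2 = \dim\Omega$, yields after a short computation the fundamental identity
\begin{equation*}
\int_0^T\!\!\int_\Gamma (m\cdot\nu)\abs{\partial_\nu u}^2\,d\Gamma\,dt = \Bigl[\int_\Omega u'\bigl(2m\cdot\nabla u + u\bigr)\,dx\Bigr]_0^T + 2T E(0).
\end{equation*}
By the choice of $x_0$ the left-hand side is concentrated on $\Gamma_0$. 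The bracket on the right is bounded in absolute value by $c_0 E(0)$ via Cauchy–Schwarz, Young's inequality, Poincaré's inequality, and the crude estimate $|m|\le \operatorname{diam}\Omega$. Hence for every $T > T_0 := c_0$,
\begin{equation*}
2(T-T_0)E(0) \le \int_0^T\!\!\int_{\Gamma_0}(m\cdot\nu)\abs{\partial_\nu u}^2 \,d\Gamma\,dt \le c_1\int_0^T\!\!\int_{\Gamma_0}\abs{\partial_\nu u}^2\,d\Gamma\,dt,
\end{equation*}
which furnishes the observability (lower) bound in \eqref{16}.

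For the reverse (direct, or ``hidden regularity'') inequality I would rerun the same multiplier identity with $m$ replaced by a fixed smooth vector field $h$ on $\overline\Omega$ that agrees with the outward unit normal $\nu$ on a neighborhood of $\Gamma_0$ in $\Gamma$; such an $h$ exists because $\Gamma_0$ lies on the union of two perpendicular sides. The bulk terms are absorbed into $cE(0)$ by the same Cauchy–Schwarz/Poincaré arguments, giving $\int_0^T\!\int_{\Gamma_0}\abs{\partial_\nu u}^2\,d\Gamma\,dt \le c_2 E(0)$.

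The only delicate point is rigorously justifying the above identities, and the fact $\partial_\nu u \in L^2((0,T)\times\Gamma)$, for initial data merely in $D^1\times D^0$. This is handled in the standard way by approximating $(u_0,u_1)$ by smooth data $(u_0^n,u_1^n)$, deriving both estimates for the smooth approximants, and then passing to the limit using the a priori hidden-regularity bound (applied to the differences $u^n-u^m$) to obtain strong $L^2$-convergence of the normal traces. Beyond this standard limiting argument, which is the only mild technical obstacle, the proof consists entirely of the elementary integration-by-parts computation sketched above.
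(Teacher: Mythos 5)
The paper does not actually prove Theorem \ref{t14}: it is quoted as a special case of Ho \cite{Ho1986} and Lions \cite{Lio1988}, obtained precisely by the multiplier method you reconstruct, so your route coincides with the proof the paper relies on. Your central identity with $m(x)=x-x_0$, $x_0=(\ell_1,\ell_2)$, and multiplier $2m\cdot\nabla u+u$ is correct (I checked the computation: $\operatorname{div}m=2$, $\nabla u=(\partial_\nu u)\nu$ and $u'=0$ on $\Gamma$ give exactly the stated identity), the signs of $m\cdot\nu$ on the four sides are as you claim, and the concluding density/hidden-regularity limiting argument is the standard and correct way to handle data merely in $D^1\times D^0$. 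The one genuine slip is in your proof of the direct inequality: there is \emph{no} smooth vector field $h$ on $\overline\Omega$ agreeing with $\nu$ on a neighbourhood of $\Gamma_0$ in $\Gamma$, because $h|_\Gamma$ must be continuous while $\nu$ jumps from $(-1,0)$ to $(0,-1)$ at the corner $(0,0)\in\overline{\Gamma_0}$. Fortunately you do not need such an $h$: since $m\cdot\nu\ge 0$ on all of $\Gamma$ and $m\cdot\nu\ge\min\{\ell_1,\ell_2\}$ on $\Gamma_0$, the very same identity read in the opposite direction yields
\begin{equation*}
\min\{\ell_1,\ell_2\}\int_0^T\!\!\int_{\Gamma_0}\abs{\partial_\nu u}^2\,d\Gamma\,dt
\le\int_0^T\!\!\int_{\Gamma}(m\cdot\nu)\abs{\partial_\nu u}^2\,d\Gamma\,dt
=\Bigl[\int_\Omega u'\bigl(2m\cdot\nabla u+u\bigr)dx\Bigr]_0^T+2TE(0)\le cE(0),
\end{equation*}
which is the direct inequality on $\Gamma_0$ (and the a priori bound needed for your limiting argument). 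Finally, your crude estimate of the bracket gives a threshold $T_0=c_0$ that is larger than the optimal diagonal length mentioned in the paper, but since the theorem only asserts the estimate for $T$ sufficiently large, this is harmless.
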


\begin{figure}
\centering
\begin{minipage}[b]{0.4\textwidth}
\begin{tikzpicture}
\draw[thick] (0,0)--(0,4)--(4,4)--(4,0);
\draw[line width=1mm,red] (0,0)--(4,0);
\draw[line width=1mm,red] (0,0)--(0,4);
\node [below left] at (0,0) {$0$};
\node [below right] at (4,0) {$\ell_1$};
\node [above left] at (0,4) {$\ell_2$};
\end{tikzpicture}
\caption{}%Th. \ref{t14}
\label{f4}
\end{minipage}%
\hspace{0.04\textwidth}%
\begin{minipage}[b]{0.4\textwidth}
\begin{tikzpicture}
\draw[thick] (0,0)--(0,4)--(4,4)--(4,0)--(0,0);
\fill[red] (1.5,0) rectangle (2,4);
\fill[red] (0,1) rectangle (4,2);
\node [below left] at (0,0) {$0$};
\node [below] at (1.5,0) {$a$};
\node [below] at (2,0) {$b$};
\node [left] at (0,1) {$c$};
\node [left] at (0,2) {$d$};
\node [below right] at (4,0) {$\ell_1$};
\node [above left] at (0,4) {$\ell_2$};
\end{tikzpicture}
\caption{}%\eqref{111}
\label{f5}
\end{minipage}
\end{figure}

The proof of Ho and Lions provides the optimal time estimate: $T$ has to be larger than the diagonal of the rectangle. See Bardos, Lebeau and Rauch \cite{BarLebRau1992} for a very general condition on $T$ based on microlocal analysis.

Mehrenberger \cite{Meh2009} has found an elementary proof of Theorem \ref{t14} (under a stronger assumption on $T$) by a clever application of an Ingham type theorem. Adapting this method, the following theorem was obtained in \cite{KomMia2013} (see Figure \ref{f5}):

\begin{theorem}\label{t15}
Fix two non-empty subintervals $(a,b)\subset (0,\ell_1)$ and $(c,d)\subset (0,\ell_2)$ and set 
\begin{equation*}
S:=\left[ (a,b)\times (0,\ell_2)\right]\cup \left[ (0,\ell_1)\times (c,d)\right].
\end{equation*}

If $T>0$ is sufficiently large, then the solutions of \eqref{15} satisfy the estimates
\begin{equation}\label{17}
\norm{u_0}_1^2+\norm{u_1}_0^2\le c \int_0^T\int_S \abs{u'(t,x,y)}^2\ dx\ dy\ dt
\end{equation}
for all $(u_0,u_1)\in D^1\times D^0$.
\end{theorem}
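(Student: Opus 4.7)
The plan is to expand $u$ in the Dirichlet eigenbasis of $-\Delta$ on $\Omega$ and apply Mehrenberger's generalized Ingham theorem \cite{Meh2009} in the time variable, following the strategy of \cite{KomMia2013}. Writing
\[
u(t,x,y)=\sum_{m,n\ge 1}\bigl(a_{m,n}e^{i\omega_{m,n}t}+b_{m,n}e^{-i\omega_{m,n}t}\bigr)\sin\tfrac{m\pi x}{\ell_1}\sin\tfrac{n\pi y}{\ell_2},
\]
with $\omega_{m,n}=\sqrt{(m\pi/\ell_1)^2+(n\pi/\ell_2)^2}$, we have $\norm{u_0}_1^2+\norm{u_1}_0^2\asymp \sum_{m,n}\omega_{m,n}^2(\abs{a_{m,n}}^2+\abs{b_{m,n}}^2)$, and $u'$ admits the same expansion with the extra factors $\pm i\omega_{m,n}$. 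The goal is to bound this sum by the right-hand side of \eqref{17}.

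Decompose $S=V\cup H$ with $V=(a,b)\times(0,\ell_2)$ and $H=(0,\ell_1)\times(c,d)$ and treat the two contributions separately. On $V$, integration in $y\in(0,\ell_2)$ uses the full orthogonality of $\{\sin(n\pi y/\ell_2)\}_{n\ge 1}$, so the $n$ indices decouple and
\[
\int_0^T\!\!\int_V\abs{u'}^2 = \tfrac{\ell_2}{2}\sum_{n\ge 1}\int_a^b\!\!\int_0^T\Bigl|\sum_{m\ge 1}i\omega_{m,n}\bigl(a_{m,n}e^{i\omega_{m,n}t}-b_{m,n}e^{-i\omega_{m,n}t}\bigr)\sin\tfrac{m\pi x}{\ell_1}\Bigr|^2 dt\,dx.
\]
For each fixed $(n,x)$ the bracket is an exponential sum in $t$ with frequencies $\{\pm\omega_{m,n}\}_m$. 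For $T$ larger than a threshold depending only on the asymptotic gap, Mehrenberger's theorem yields
\[
\int_0^T\abs{\cdot}^2\,dt \ge c\sum_{m\ge 1}\omega_{m,n}^2\bigl(\abs{a_{m,n}}^2+\abs{b_{m,n}}^2\bigr)\sin^2\tfrac{m\pi x}{\ell_1}
\]
with $c$ independent of $n$ and $x$, and then $\int_a^b\sin^2(m\pi x/\ell_1)\,dx$ is uniformly bounded below for $m\ge m_0(a,b)$. Symmetrically, the $H$-contribution controls all modes with $n\ge n_0(c,d)$.

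The main obstacle is verifying Mehrenberger's generalized gap hypothesis uniformly in the frozen index: for fixed $n$, the gap $\omega_{m+1,n}-\omega_{m,n}$ tends to $\pi/\ell_1$ as $m\to\infty$, but shrinks to zero for small $m$ and large $n$, so the classical Ingham theorem fails; Mehrenberger's generalization handles this by grouping consecutive clustered frequencies, and the resulting asymptotic gap is indeed bounded below independently of $n$ (and symmetrically, of $m$). Finally, the $V$- and $H$-bounds together control all $(m,n)$ outside a finite set, and the remaining finite-dimensional low-frequency subspace is absorbed by a standard compactness-uniqueness argument, since a smooth solution of \eqref{15} that vanishes on $(0,T)\times S$ must be identically zero.
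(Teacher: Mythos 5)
Your overall skeleton (eigenfunction expansion, splitting $S$ into the vertical and horizontal strips, an Ingham-type inequality in $t$ for each frozen transverse index, and the positivity of $\inf_m\int_a^b\sin^2(m\pi x/\ell_1)\,dx$) matches the paper's. But the key quantitative step is wrong. For fixed $n$, Mehrenberger's theorem applied to the frequencies $\pm\omega_{m,n}$ does \emph{not} yield
\begin{equation*}
\int_0^T\Bigl|\sum_m(\cdots)\Bigr|^2\,dt\ \ge\ c\sum_{m\ge1}\omega_{m,n}^2\bigl(\abs{a_{m,n}}^2+\abs{b_{m,n}}^2\bigr)\sin^2\tfrac{m\pi x}{\ell_1}
\end{equation*}
with $c>0$ independent of $n$: the partial gap condition \eqref{31} holds for this family with threshold index equal to $n$ itself, so the conclusion \eqref{32} puts a positive coefficient only on the modes $m\ge n$ and a \emph{negative} $O(1/T)$ coefficient on the modes $m<n$. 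No uniform positive bound over all $m$ can hold for fixed $T$, since for large $n$ the frequencies $\omega_{1,n},\omega_{2,n},\dots$ lie within $O(1/n)$ of one another, so a two-term combination (with coefficients adjusted for the $\sin$ factors at the given $x$) can be made nearly zero on $[0,T]$ while its $\ell^2$ norm stays of order one. Nor does ``grouping clustered frequencies'' rescue a uniform constant: the number of consecutive frequencies within a fixed distance of each other grows without bound as $n\to\infty$, so weakened-gap theorems with bounded cluster size do not apply uniformly.

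Consequently your final assembly rests on an incorrect picture: the vertical strip does not control ``all modes with $m\ge m_0(a,b)$''. It controls, with positive coefficient of order $T$, the modes with $m\ge n$ (one side of the diagonal in the $(m,n)$ lattice), at the price of subtracting an $O(1/T)$ multiple of the modes with $m<n$ — an infinite set, not a finite one — and symmetrically for the horizontal strip. The paper completes the argument (Corollary \ref{c33} and the chain \eqref{38}--\eqref{311new}) precisely by adding the two strip estimates so that, for $T^2>32\pi^2+16\pi^3/m$ with $m=\min\{m_{a,b},m_{c,d}\}$, the positive main term of each strip absorbs the negative error term of the other; no compactness--uniqueness step is needed, and the ``finite-dimensional low-frequency remainder'' you invoke does not exist as described. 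Your plan is repairable by replacing the claimed uniform Ingham bound with the partial-gap version and carrying out this cross-compensation between the two strips.
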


\begin{figure}
\centering
\begin{minipage}[b]{0.4\textwidth}
\begin{tikzpicture}
\draw[thick] (0,0)--(0,4)--(4,4)--(4,0);
\draw[line width=1mm,red] (0,0)--(4,0);
\fill[red] (1,0) rectangle (2,4);
\node [below left] at (0,0) {$0$};
\node [below] at (1,0) {$a$};
\node [below] at (2,0) {$b$};
\node [below right] at (4,0) {$\pi$};
\node [above left] at (0,4) {$\pi$};
\end{tikzpicture}
\caption{}%\eqref{19}
\label{f6}
\end{minipage}%
\hspace{0.04\textwidth}%
\begin{minipage}[b]{0.4\textwidth}
\begin{tikzpicture}
\draw[thick] (0,0)--(0,4)--(4,4)--(4,0)--(0,0);
\draw[line width=0.5mm,red] (1.5,0)--(1.5,4);
\fill[red] (0,1) rectangle (4,2);
\node [below left] at (0,0) {$0$};
\node [left] at (0,1) {$c$};
\node [left] at (0,2) {$d$};
\node [below] at (1.5,0) {$\alpha$};
\node [below right] at (4,0) {$\pi$};
\node [above left] at (0,4) {$\pi$};
\end{tikzpicture}
\caption{}
\label{f7}
\end{minipage}
\end{figure}

\begin{remark}
The direct inequality 
\begin{equation*}
\int_0^T\int_S \abs{u'(t,x,y)}^2\ dx\ dy\ dt\le c\left( \norm{u_0}_1^2+\norm{u_1}_0^2\right)
\end{equation*}
also holds by a short ingenious argument of Haraux \cite{Har1988} (see also \cite{Lio1988b} I, Lemme 2.1, pp. 401--403).
\end{remark}

We give three further results in this direction by showing that we may combine the boundary and internal observations, and that we may restrict the internal observation onto a straight line for a large set of initial data. 

For simplicity of notations we consider only the square domain $\Omega=(0,\pi)\times (0,\pi)$, but the results extend to arbitrary rectangular domains by obvious adaptations of the proofs.

Our first result is the following (see Figure \ref{f6}):

\begin{theorem}\label{t16}
Fix $0\le a<b\le\pi$ arbitrarily. If $T>0$ is sufficiently large, then the  solutions of \eqref{15} satisfy the estimates
\begin{multline}\label{18}
\norm{u_0}_1^2+\norm{u_1}_0^2\le c\left( \int_0^T\int_0^{\pi}\abs{\partial_{\nu}(t,x_1,0)}^2 \ dx_1\ dt\right.\\
\left. +\int_0^T\int_a^b\int_0^{\pi}\abs{u'(t,x_1,x_2)}^2\ dx_2\ dx_1\ dt\right).
\end{multline}
\end{theorem}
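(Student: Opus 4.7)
The plan is to combine the Fourier expansion of the solution with the Ingham-type estimates of Mehrenberger \cite{Meh2009}, along the lines of the proof of Theorem \ref{t15} in \cite{KomMia2013}. Writing the solution of \eqref{15} as
\[
u(t,x_1,x_2)=\sum_{m,n\geq 1}\bigl(c_{mn}^+e^{i\omega_{mn}t}+c_{mn}^-e^{-i\omega_{mn}t}\bigr)\sin(mx_1)\sin(nx_2),\qquad \omega_{mn}=\sqrt{m^2+n^2},
\]
one has $\norm{u_0}_1^2+\norm{u_1}_0^2\asymp\sum_{m,n}\omega_{mn}^2(\abs{c_{mn}^+}^2+\abs{c_{mn}^-}^2)$, and the goal is to bound this sum by the right-hand side of \eqref{18}.

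For the boundary piece, note that $\partial_{\nu}u(t,x_1,0)=\partial_{x_2}u(t,x_1,0)=\sum_{m,n}n(c_{mn}^+e^{i\omega_{mn}t}+c_{mn}^-e^{-i\omega_{mn}t})\sin(mx_1)$. Orthogonality of $\{\sin(mx_1)\}$ on $(0,\pi)$ reduces, for each fixed $m$, to a one-dimensional exponential series in $t$ with frequencies $\{\pm\omega_{mn}\}_{n\geq 1}$ and coefficients $\pm n\,c_{mn}^{\pm}$. For each $m$ this sequence has asymptotic gap $1$; the $O(m)$ small-$n$ frequencies violating the gap condition are exactly the clusters handled by Mehrenberger's generalization. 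Choosing $T$ large enough, uniformly in $m$, one obtains a positive constant $c$ with
\[
\int_0^T\int_0^{\pi}\abs{\partial_{x_2}u(t,x_1,0)}^2\,dx_1\,dt\geq c\sum_{m,n\geq 1}n^2\bigl(\abs{c_{mn}^+}^2+\abs{c_{mn}^-}^2\bigr).
\]
For the internal piece, one proceeds symmetrically: orthogonality of $\{\sin(nx_2)\}$ in $x_2$ gives, for each fixed $n$, the series $G_n(t,x_1)=\sum_m i\omega_{mn}(c_{mn}^+e^{i\omega_{mn}t}-c_{mn}^-e^{-i\omega_{mn}t})\sin(mx_1)$. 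Applying Mehrenberger's theorem in $t$ at each fixed $x_1$, integrating in $x_1\in(a,b)$, and using the uniform lower bound $\int_a^b\sin^2(mx_1)\,dx_1\geq c_0>0$ valid for all $m\geq M_0$ (with $M_0$ depending only on $a,b$), yields
\[
\int_0^T\int_a^b\int_0^{\pi}\abs{u'(t,x_1,x_2)}^2\,dx_2\,dx_1\,dt\geq c\sum_{n\geq 1,\,m\geq M_0}\omega_{mn}^2\bigl(\abs{c_{mn}^+}^2+\abs{c_{mn}^-}^2\bigr).
\]

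Combining the two estimates finishes the proof: for $m\geq M_0$ the internal bound already dominates $(m^2+n^2)(\abs{c_{mn}^+}^2+\abs{c_{mn}^-}^2)$, and for the finitely many $m<M_0$ the elementary inequality $\omega_{mn}^2\leq(M_0^2+1)n^2$ (valid because $n\geq 1$) lets the boundary bound absorb the remaining contribution. The main obstacle is the Ingham analysis in the two middle steps: the frequencies $\omega_{mn}=\sqrt{m^2+n^2}$ cluster whenever one index is small relative to the other, so the classical Ingham theorem does not suffice and one must invoke Mehrenberger's weighted generalization, choosing $T$ large enough to render both its gap hypothesis and the absorption of the cluster contributions uniform in the separated index. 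This cluster structure is the same one already treated in \cite{KomMia2013}, so the relevant quantitative estimates should be recyclable from that paper.
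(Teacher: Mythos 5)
Your overall architecture (Fourier expansion, orthogonality reduction to one--dimensional exponential sums, Mehrenberger's theorem per fixed index) is the same as the paper's, but both of your intermediate inequalities are false, and this is not a repairable detail: it is exactly the point where the proof has to work harder. Mehrenberger's estimate \eqref{32} does not ``handle'' the cluster frequencies in the sense of giving a positive contribution on them; it gives a \emph{negative} coefficient $-(2\pi/T\gamma)^2$ on the whole sum, so after applying Corollary \ref{c33} the boundary term yields a positive lower bound only on the region $k_2>k_1$ and a negative contribution on $k_2\le k_1$, and symmetrically the internal term controls only $k_1\ge k_2$. Your claimed boundary inequality
$\int_0^T\int_0^{\pi}\abs{\partial_{x_2}u(t,x_1,0)}^2\,dx_1\,dt\ge c\sum_{m,n}n^2(\abs{c_{mn}^+}^2+\abs{c_{mn}^-}^2)$
over \emph{all} $(m,n)$ with a fixed $T$ is simply wrong: take $c_{m,1}^+=2a$, $c_{m,2}^+=-a$ and all other coefficients zero, so that the weighted coefficients $n\,c_{mn}^+$ are $2a$ and $-2a$; since
\begin{equation*}
\abs{e^{i\omega_{m,1}t}-e^{i\omega_{m,2}t}}\le (\omega_{m,2}-\omega_{m,1})\,t\le \frac{3T}{2m}\quad\text{on }[0,T],
\end{equation*}
the left-hand side is $O(a^2T^3/m^2)\to 0$ as $m\to\infty$, while the right-hand side stays $8a^2$. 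The analogous claim for the internal term (control of all $n$ once $m\ge M_0$) is likewise false -- it would mean the vertical strip alone observes all but finitely many modes, contradicting the need for both strips in Theorem \ref{t15} and the geometric control condition.

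Consequently your final absorption step collapses: you cannot use the boundary term to mop up the small-$m$ modes and the internal term for the rest, because neither term gives an unconditional positive bound on its assigned region. The paper's combination is different and is the essential idea: the two ``bad'' cluster regions are \emph{complementary} ($n<m$ for the boundary observation, $m<n$ for the internal one after exchanging which variable is integrated over its full range), so adding \eqref{38} to \eqref{315} lets the positive part of each estimate absorb the negative cluster part of the other, uniformly, once $T^2$ exceeds an explicit threshold depending on $m_{a,b}$. To fix your proof you need to keep the negative cluster terms explicitly in both intermediate inequalities and verify this cross-absorption, rather than asserting positive bounds on the full spectrum.
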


\begin{remark}
Here too, the direct inequality 
\begin{multline*}
\int_0^T\int_0^{\pi}\abs{\partial_{\nu}(t,x_1,0)}^2 \ dx_1\ dt\\
+\int_0^T\int_a^b\int_0^{\pi}\abs{u'(t,x_1,x_2)}^2\ dx_2\ dx_1\ dt 
\le c'\left( \norm{u_0}_1^2+\norm{u_1}_0^2\right)
\end{multline*}
also holds by combining the direct part of Theorem \ref{t14} with Haraux's estimate mentioned above.
\end{remark}

\begin{figure}
\centering
\begin{minipage}[b]{0.4\textwidth}
\begin{tikzpicture}
\draw[thick] (0,0)--(0,4)--(4,4)--(4,0)--(0,0);
\draw[ultra thick,red] (1.5,0)--(1.5,4);
\draw[line width=1mm,red] (0,0)--(4,0);
\node [below left] at (0,0) {$0$};
\node [below] at (1.5,0) {$\alpha$};
\node [below right] at (4,0) {$\pi$};
\node [above left] at (0,4) {$\pi$};
\end{tikzpicture}
\caption{}
\label{f8}
\end{minipage}%
\hspace{0.04\textwidth}%
\begin{minipage}[b]{0.4\textwidth}
\begin{tikzpicture}
\draw[thick] (0,0)--(0,4)--(4,4)--(4,0)--(0,0);
\draw[ultra thick,red] (0,3)--(4,3);
\draw[ultra thick,red] (1.5,0)--(1.5,4);
\node [below left] at (0,0) {$0$};
\node [below] at (1.5,0) {$\alpha$};
\node [left] at (0,3) {$\beta$};
\node [below right] at (4,0) {$\pi$};
\node [above left] at (0,4) {$\pi$};
\end{tikzpicture}
\caption{}
\label{f9}
\end{minipage}
\end{figure}

In our last two theorems we consider only \emph{smooth solutions}, corresponding to initial data $(u_0,u_1)$ belonging to $D^2\times D^1$.

\begin{definition}
A function $f\in L^1(0,\pi)$ is \emph{$p$-symmetric} for some integer $p\ge 2$ if its $2\pi$-periodic odd extension satisfies the equality
\begin{equation*}
\sum_{k=1}^pf\left( t+\frac{2k\pi}{p}\right)=0
\end{equation*}
for almost every $t\in\RR$.
\end{definition}

Now fix a real number $\alpha\in (0,\pi)$ such that $\alpha/\pi$ is rational, and let $p$ be the smallest positive integer for which $p\alpha/\pi$ is integer. Observe that $\sin n\alpha =0$ if and only if $n$ is an integer multiple of $p$.

\begin{theorem}\label{t17}
If $T>0$ is sufficiently large, then there exists a positive constant $c$ such that all smooth solutions of \eqref{15}, corresponding to $p$-symmetric (in $x_1$) initial data $u_0, u_1$,  satisfy the estimates
\begin{multline}\label{19}
\norm{u_0}_1^2+\norm{u_1}_0^2\le c 
\int_0^T\int_0^{\pi} \abs{u'\left(t,\alpha,x_2\right)}^2\ dx_2 \ dt\\
+c\int_0^T\int_0^{\pi}\int_c^d\abs{u'(t,x_1,x_2)}^2\ dx_2\ dx_1\ dt
\end{multline}
and
\begin{multline}\label{110}
\norm{u_0}_1^2+\norm{u_1}_0^2\le c 
\int_0^T\int_0^{\pi} \abs{u'\left(t,\alpha,x_2\right)}^2\ dx_2 \ dt\\
+c\int_0^T\int_0^{\pi}\abs{\partial_{\nu}(t,x_1,0)}^2 \ dx_1\ dt.
\end{multline}
\end{theorem}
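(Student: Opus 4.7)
My plan is to Fourier-expand smooth solutions of \eqref{15} in the eigenbasis $\sin(mx_1)\sin(nx_2)$:
\begin{equation*}
u(t,x_1,x_2)=\sum_{m,n\ge 1}\bigl(a_{m,n}\cos(\omega_{m,n}t)+b_{m,n}\sin(\omega_{m,n}t)\bigr)\sin(mx_1)\sin(nx_2),
\end{equation*}
with $\omega_{m,n}:=\sqrt{m^2+n^2}$, so that $\norm{u_0}_1^2+\norm{u_1}_0^2\asymp\sum_{m,n}\omega_{m,n}^2(\abs{a_{m,n}}^2+\abs{b_{m,n}}^2)$. The $p$-symmetry in $x_1$ forces $a_{m,n}=b_{m,n}=0$ whenever $p\mid m$, and for the surviving indices $\abs{\sin(m\alpha)}$ depends only on $m\bmod p$ and is therefore bounded below by some $\delta=\delta(\alpha,p)>0$. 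This uniform lower bound is the whole point of the hypothesis: it lets the single line $\{\alpha\}\times(0,\pi)$ play the role of the vertical strip $(a,b)\times(0,\pi)$ in Theorem \ref{t15}.

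For \eqref{19}, Parseval in $x_2$ applied to the line term gives
\begin{equation*}
\int_0^\pi \abs{u'(t,\alpha,x_2)}^2\ dx_2=\frac{\pi}{2}\sum_n\abs{F_n(t)}^2,
\end{equation*}
where $F_n(t):=\sum_m \omega_{m,n}\sin(m\alpha)\bigl(-a_{m,n}\sin(\omega_{m,n}t)+b_{m,n}\cos(\omega_{m,n}t)\bigr)$; here the pointwise weight $\abs{\sin(m\alpha)}^2\ge\delta^2$ replaces the integral $\int_a^b\sin^2(mx_1)\,dx_1\asymp(b-a)/2$ used in the proof of Theorem \ref{t15}. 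The horizontal-strip term is treated exactly as in \cite{KomMia2013}. I then invoke the Mehrenberger-type Ingham inequality of that paper, grouping the frequencies $\omega_{m,n}$ into packets of bounded width inside which the number of indices is controlled, to conclude \eqref{19} for any $T$ larger than the threshold dictated by the asymptotic gap of $\{\omega_{m,n}\}$.

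The proof of \eqref{110} is parallel. Parseval in $x_1$ applied to the boundary trace yields
\begin{equation*}
\int_0^\pi \abs{\partial_\nu u(t,x_1,0)}^2\ dx_1=\frac{\pi}{2}\sum_m\abs{G_m(t)}^2,
\end{equation*}
with $G_m(t):=\sum_n n\bigl(a_{m,n}\cos(\omega_{m,n}t)+b_{m,n}\sin(\omega_{m,n}t)\bigr)$; the weight $n$ coming from $\partial_{x_2}\sin(nx_2)|_{x_2=0}$ matches the $H^1$ scaling in the $x_2$ direction, exactly as in Theorem \ref{t14}. Combined with the $F_n$-estimate from the vertical segment—which supplies the $x_1$-direction energy through $\abs{\sin(m\alpha)}^2\ge\delta^2$—the same packet argument delivers \eqref{110}.

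The principal difficulty is controlling the two observations simultaneously inside each frequency packet. Within a packet $\{(m,n):\omega_{m,n}\in[\lambda,\lambda+1]\}$ one needs to solve a finite Gram-matrix problem showing that the map sending the packet coefficients to the pair of traces is bounded below uniformly in $\lambda$. The bound $\abs{\sin(m\alpha)}\ge\delta$ provided by $p$-symmetry is essential here: without it the modes with $p\mid m$ would be invisible to the line probe and the Gram matrix would become singular on them.
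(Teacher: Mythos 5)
Your overall skeleton is right---Fourier expansion, the observation that $p$-symmetry kills every mode with $p\mid k_1$ so that $\abs{\sin(k_1\alpha)}\ge\delta>0$ on the surviving modes (this is exactly the paper's Lemma \ref{l41} together with the constants $m_p,M_p$, though you assert the equivalence of the symmetry condition with the vanishing of the Fourier coefficients without proving it), and the idea that the line at $x_1=\alpha$ then acts like a vertical strip. But the core analytic step is a genuine gap. You propose to group the frequencies $\omega_{m,n}=\sqrt{m^2+n^2}$ into ``packets of bounded width inside which the number of indices is controlled'' and to solve a finite Gram-matrix problem uniformly in the packet. The number of lattice points $(m,n)$ with $\lambda\le\sqrt{m^2+n^2}<\lambda+1$ grows like $\lambda$, so the packets are \emph{not} of bounded cardinality, and the uniform lower bound on the Gram matrix that you yourself identify as ``the principal difficulty'' is precisely what is never established; as stated, the reduction does not get off the ground.

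The paper avoids this entirely. It applies Parseval in the transverse variable first, reducing each observation to a sum over the transverse index of one-dimensional integrals $\int_0^T\abs{\sum_{k_1}(\dots)e^{\pm i\abs{k}t}}^2dt$ with $k_2$ \emph{fixed}. For fixed $k_2$ the frequencies satisfy Mehrenberger's \emph{partial} gap condition (Lemma \ref{l32}: the gap $\ge\abs{k_1-k_1'}/(2\sqrt2)$ holds once $\max\{k_1,k_1'\}\ge k_2$), and his Ingham-type Theorem \ref{t31} then yields a lower bound that is positive on the ``good'' indices $k_1\ge k_2$ but carries a negative term on $k_1<k_2$ (inequality \eqref{46}, with the weights $m_p,M_p$ inserted via the $p$-symmetry). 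The second observation---the strip $(c,d)$ in $x_2$ for \eqref{19}, or the normal derivative on $x_2=0$ for \eqref{110}---produces by the symmetric argument a bound that is positive exactly on the complementary set $k_2>k_1$ (inequalities \eqref{310} and \eqref{315}). Adding the two, the positive terms cover all of $\ZZ_{>0}^2$ and dominate the negative ones once $T$ exceeds an explicit threshold. This fiber-by-fiber use of the partial gap condition, plus the complementarity of the two index regions, is the missing mechanism in your argument; without it (or some substitute such as Tenenbaum--Tucsnak-type results, which the paper uses only in Section \ref{s2} for the plate) the simultaneous control of both observations is not justified.
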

See Figures \ref{f7} and \ref{f8}.

\begin{remark}
For $p=2$ the solutions are odd in $x_1$ with respect to $\pi/2$. Hence the theorem also follows by applying Mehrenberger's method directly to the left half of $\Omega$ with homogeneous Neumann boundary condition on the right side and homogeneous Dirichlet boundary conditions on the three other sides. The case $p\ge 3$ does not seem to follow from earlier results.
\end{remark}

Finally fix two real numbers $\alpha,\beta\in (0,\pi)$ such that $\alpha/\pi$ and $\beta/\pi$ are rational, and let $p,q$ be the smallest positive integers for which $p\alpha/\pi$ and $q\beta/\pi$ are integers.
In the following theorem we consider only $(p,q)$-symmetric initial initial data, i.e., initial data which are $p$-symmetric in $x_1$ and $q$-symmetric in $x_2$.

\begin{theorem}\label{t18}
If $T>0$ is sufficiently large, then there exists a positive constant $c$ such that all smooth solutions of \eqref{15} corresponding to $(p,q)$-symmetric initial data satisfy the estimates
\begin{multline}\label{111}
\int_0^T\int_0^{\pi} \abs{u'\left(t,\alpha,x_2\right)}^2\ dx_2 \ dt
+\int_0^T\int_0^{\pi} \abs{u'\left(t,x_1, \beta\right)}^2\ dx_1 \ dt\\
\ge c\int_{\Omega}\abs{\nabla u_0}^2+\abs{u_1}^2\ dx.
\end{multline}
\end{theorem}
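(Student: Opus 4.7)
\emph{Plan.} My strategy is to expand smooth solutions in the eigenbasis $\sin(mx_1)\sin(nx_2)$, reduce each of the two observations to a family of one-dimensional exponential sums via Parseval in the transverse variable, and finally invoke Mehrenberger's generalization of Ingham's theorem. Writing
\[
u(t,x_1,x_2)=\sum_{m,n\geq 1}\bigl(A_{mn}e^{i\omega_{mn}t}+B_{mn}e^{-i\omega_{mn}t}\bigr)\sin(mx_1)\sin(nx_2)
\]
with $\omega_{mn}:=\sqrt{m^2+n^2}$, the right-hand side of \eqref{111} is, up to an absolute constant, $\sum_{m,n}\omega_{mn}^2(|A_{mn}|^2+|B_{mn}|^2)$. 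The $(p,q)$-symmetry forces $A_{mn}=B_{mn}=0$ whenever $p\mid m$ or $q\mid n$; writing $\alpha/\pi=r/p$ in lowest terms, the function $m\mapsto\sin(m\alpha)$ is $2p$-periodic and nonzero on $\set{m:p\nmid m}$, so $|\sin(m\alpha)|\geq\delta>0$ uniformly on this set, and likewise $|\sin(n\beta)|\geq\delta$ on $\set{n:q\nmid n}$.

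\emph{Reduction via Parseval.} Parseval in $x_2$ converts the first integral on the left of \eqref{111} into
\[
\frac{\pi}{2}\sum_n\int_0^T\Bigl|\sum_m \omega_{mn}\sin(m\alpha)\bigl(A_{mn}e^{i\omega_{mn}t}-B_{mn}e^{-i\omega_{mn}t}\bigr)\Bigr|^2\,dt,
\]
with a symmetric expression (Parseval in $x_1$) for the second integral. For each fixed $n$, the adjacent gaps
\[
\omega_{m+1,n}-\omega_{mn}=\frac{2m+1}{\omega_{m+1,n}+\omega_{mn}}
\]
are bounded below by a positive constant whenever $m\geq n$, but cluster like $m/n$ when $m\ll n$. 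Applying Mehrenberger's theorem with a cluster decomposition that lumps the slow modes $m<n$ into a single block per fixed $n$, I recover
\[
\int_0^T\int_0^\pi|u'(t,\alpha,x_2)|^2\,dx_2\,dt \geq c\sum_{m\geq n}\omega_{mn}^2\bigl(|A_{mn}|^2+|B_{mn}|^2\bigr),
\]
where the uniform lower bound $|\sin(m\alpha)|\geq\delta$ has been absorbed into $c$.

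\emph{Assembly and main obstacle.} Exchanging the roles of $x_1$ and $x_2$, the observation at $x_2=\beta$ controls the complementary region $n\geq m$, and adding the two bounds yields \eqref{111} (the diagonal $m=n$ being counted twice, but harmlessly). The delicate point is the uniformity of Mehrenberger's theorem in $n$ (and, symmetrically, in $m$): one must verify that the clustering of $\omega_{mn}$ for $m\ll n$ does not degrade the constant as $n\to\infty$, and that the threshold $T_0$ for validity depends only on the geometric data of $\Omega$ and on $p,q$, not on the mode indices. This uniformity is precisely what Mehrenberger's framework is designed to deliver, provided one groups the dense modes into a single cluster whose diameter is controlled by the diagonal of $\Omega$; the step from the grouped lower bound back to individual coefficients on the separated region $m\geq n$ is then automatic.
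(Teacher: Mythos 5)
Your overall architecture is exactly the paper's: expand in $\sin(mx_1)\sin(nx_2)$, use Parseval in the transverse variable to reduce each observation to one-dimensional exponential sums for each fixed transverse index, exploit the $(p,q)$-symmetry (via the paper's Lemma \ref{l41}) to kill the modes with $p\mid m$ or $q\mid n$ and hence get a uniform positive lower bound $m_p$ for $\sin^2(m\alpha)$ on the surviving modes, apply Mehrenberger's theorem with the partial gap of Lemma \ref{l32}, and finally add the two observations so that the region $m\ge n$ controlled by the observation at $x_1=\alpha$ and the region $n\ge m$ controlled by the observation at $x_2=\beta$ together cover all modes.

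However, there is a genuine gap at the key analytic step. You claim the intermediate inequality
\begin{equation*}
\int_0^T\int_0^\pi\abs{u'(t,\alpha,x_2)}^2\,dx_2\,dt \ \geq\ c\sum_{m\geq n}\omega_{mn}^2\bigl(\abs{A_{mn}}^2+\abs{B_{mn}}^2\bigr),
\end{equation*}
i.e.\ a lower bound involving \emph{only} the separated modes, with the clustered modes $m<n$ simply discarded. Mehrenberger's theorem does not deliver this: its conclusion \eqref{32} is of the form (separated part) minus $(2\pi/T\gamma)^2$ times the \emph{total} energy, so the clustered modes necessarily enter with a negative coefficient of order $1/T^2$, and for data heavily weighted on the cluster the resulting lower bound can be vacuous (indeed, for fixed $T$ and $n\to\infty$ the frequencies $\omega_{mn}$ with $m\ll n$ have gaps of order $m/n$, so no uniform-in-$n$ estimate ignoring them is available; ``lumping them into a single block'' does not remove their contribution, and the weakened-gap theorems that handle genuine clusters require clusters of bounded cardinality, which fails here). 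The correct statement, which is \eqref{46} in the paper (and its mirror \eqref{51}), reads
\begin{equation*}
\int_0^T\int_0^{\pi}\abs{u'(t,\alpha ,x_2)}^2\,dx_2\,dt
\ \ge\ \Bigl(T-\tfrac{32\pi^2}{T}\Bigr)m_p\!\!\sum_{k_1\ge k_2}\!\abs{k}^2\bigl(\abs{a_k}^2+\abs{b_k}^2\bigr)
-\tfrac{32\pi^2 M_p}{T}\!\!\sum_{k_1<k_2}\!\abs{k}^2\bigl(\abs{a_k}^2+\abs{b_k}^2\bigr),
\end{equation*}
and the whole point of the proof is to \emph{carry this negative term} until the two observations are added: the negative $O(1/T)$ term over $\set{k_1<k_2}$ from the first observation is then absorbed by the positive $O(T)$ term over $\set{k_2>k_1}$ from the second observation (and vice versa), which is what forces the threshold $T^2>32\pi^2 M_{p,q}$. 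Your phrase ``the step from the grouped lower bound back to individual coefficients on the separated region is then automatic'' papers over precisely this cancellation; as written, the proof is incomplete at its central step.
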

See Figure \ref{f9}.

\begin{remark}
Similarly to the preceding result, for $p=q=2$ the theorem also follows by applying Mehrenberger's method directly to the subsquare $(0,\pi/2)\times(0,\pi/2)$ with homogeneous Dirichlet boundary conditions on the upper and left sides, and homogeneous Neumann boundary conditions on the lower and right sides, because the solutions are odd with respect to $\pi/2$ in both variables. The other cases of Theorem \ref{t18} do not seem to follow from earlier results.
\end{remark}

\section{Proof of Theorem \ref{t13}}\label{s2}

We begin by recalling several results from analysis and Diophantine approximation theory. We fix three positive numbers $u,v,z$, and for $k=(k_1,k_2)\in\ZZ^2$ we write for brevity $\abs{k}^2=uk_1^2+vk_2^2$.

\begin{theorem}\label{t21}
(See \cite[Theorem 4.2]{TenTuc}.) 
Given a non-empty bounded open set $\omega$ in $\RR^2$, there exist two positive constants $c_1, c_2$ such that
\begin{equation*}
\int_{\omega}\abs{\sum_{k_1,k_2=1}^{\infty}a_ke^{i(zk_2x_2+\abs{k}^2t)}}\asymp \sum_{k_1,k_2=1}^{\infty}\abs{a_k}^2
\end{equation*}
for all square summable families $(a_k)$ of complex numbers.
\end{theorem}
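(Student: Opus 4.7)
The plan is to treat the stated estimate as a two-dimensional Ingham-type inequality for the frequency vectors $\nu_k:=(\abs{k}^2,zk_2)\in\RR^2$ indexed by $k=(k_1,k_2)$ with positive integer entries. The upper bound in $\asymp$ is the easier half: by embedding $\omega$ in a sufficiently large rectangle and using a Bessel-type estimate for well-separated exponentials, one dominates the integral by $\sum\abs{a_k}^2$. The substance is the lower bound on an arbitrarily small open set, and I would first reduce to a product rectangle $R=(T_1,T_2)\times(a,b)\subset\omega$; shrinking $\omega$ only makes the lower bound harder.

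Two one-dimensional facts are already available on such a rectangle: in the $x_2$-variable the frequencies $\{zk_2\}_{k_2}$ are uniformly separated by $z$, giving Ingham's classical inequality as soon as $b-a>2\pi/z$; in the $t$-variable, for fixed $k_2$, the frequencies $\{uk_1^2+vk_2^2\}_{k_1}$ have consecutive gaps $u(2k_1+1)\to\infty$, so Haraux's theorem (as sharpened by Mehrenberger) gives a two-sided estimate on any non-empty interval $(T_1,T_2)$ with no lower bound on its length. However, the intervals $(a,b)$ and $(T_1,T_2)$ may both be arbitrarily short, and the cross-term $vk_2^2 t$ in the exponent couples the two variables, so a tensor-product argument does not close.

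To push through the joint estimate I would follow the method underlying \cite[Theorem~4.2]{TenTuc}: choose a nonnegative, compactly supported test function $\varphi$ with $\mathrm{supp}\,\varphi\subset R$ and $\widehat\varphi(0)>0$, multiply the squared modulus of the sum by $\varphi$, integrate, and expand the resulting double sum. The inequality then reduces to proving that
\begin{equation*}
\sup_k\sum_{\ell\ne k}\abs{\widehat\varphi(\nu_k-\nu_\ell)}
\end{equation*}
is finite and can be made as small as desired by rescaling $\varphi$. Two separation properties drive the bound: if $k_2\ne\ell_2$, then the second coordinate of $\nu_k-\nu_\ell$ has absolute value at least $z$; if $k_2=\ell_2$ but $k_1\ne\ell_1$, the first coordinate has absolute value $u\abs{k_1^2-\ell_1^2}$, which tends to infinity along the ``diagonals''. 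Combined with the rapid decay of $\widehat\varphi$, these gap estimates control the cross-sum and hence establish the lower bound.

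The hard part is precisely this joint two-dimensional analysis: the coupling $vk_2^2 t$ forbids a clean separation of variables, so one cannot simply combine Ingham in $x_2$ with Haraux in $t$. It is the non-uniform yet controlled gap structure of the vector frequencies $(\nu_k)$ in $\RR^2$, together with the decay of $\widehat\varphi$, that produces the two-sided estimate on any bounded open $\omega$ without a size condition on the domain.
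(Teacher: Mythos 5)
First, a point of comparison: the paper does not prove Theorem \ref{t21} at all --- it quotes it from Tenenbaum--Tucsnak, and its only ``proof content'' is the two remarks following the statement (the case $z\ne 1$ follows from $z=1$ by a change of variables, and the direct inequality follows from the inverse one because the inverse inequality forces a uniform gap on the exponents). Your upper bound is consistent with that and is fine. The problem is your lower bound.

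The gap is at the decisive step, where you assert that
$\sup_k\sum_{\ell\ne k}\abs{\widehat\varphi(\nu_k-\nu_\ell)}$
``is finite and can be made as small as desired by rescaling $\varphi$.'' You cannot rescale: $\varphi$ must stay supported in the fixed small rectangle $R\subset\omega$, so $\widehat\varphi$ is essentially constant (of size $\widehat\varphi(0)$) on a dual box whose sides are of order $1/(T_2-T_1)$ and $1/(b-a)$, i.e.\ \emph{huge} when $\omega$ is small. The two separation properties you invoke (second coordinate at least $z$ when $k_2\ne\ell_2$; first coordinate $u\abs{k_1^2-\ell_1^2}$ when $k_2=\ell_2$) amount only to a uniform gap for the vector exponents $\nu_k$ in $\RR^2$, and a uniform gap yields the inverse inequality only on balls of sufficiently large radius (Kahane); it cannot give observability on arbitrarily small sets. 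Concretely, already the terms with $\ell_2=k_2$ contribute $\widehat\psi_2(0)\sum_{\ell_1\ne k_1}\abs{\widehat\psi_1(u(k_1^2-\ell_1^2))}$, and for small $k_1$ the number of $\ell_1$ with $u\abs{k_1^2-\ell_1^2}\le 1/(T_2-T_1)$ grows without bound as $T_2-T_1\to 0$, so the off-diagonal sum exceeds $\widehat\varphi(0)$ and the argument does not close; the cross terms $k_2\ne\ell_2$, where $u(k_1^2-\ell_1^2)+v(k_2^2-\ell_2^2)$ can be bounded for infinitely many pairs, are worse still. What actually makes \cite[Theorem 4.2]{TenTuc} true is arithmetic information beyond the gap structure --- a divisor-function bound on the number of solutions of $\abs{u(k_1^2-\ell_1^2)+v(k_2^2-\ell_2^2)}\le C$, fed into an Ingham/Beurling-type theorem with weakened gap conditions. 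Without that input (or an equivalent), the lower bound on an arbitrarily small $\omega$ is not established, so the proposal as written does not prove the theorem; it would be more honest (and would match the paper) to cite the result.
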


\begin{remarks}\mbox{}

\begin{itemize}
\item The theorem was stated in \cite{TenTuc} with $z=1$ but the case $z\ne 1$ hence follows by a change of variable.
\remove{\item There is a misprint in the statement of \cite[Theorem 4.2]{TenTuc} because $m$ should run only over the positive integers. Indeed, the inequality fails for example for all two-term sums with $a_{-m,n}=-a_{mn}\ne 0$.}
\item In \cite{TenTuc} only the so-called \emph{inverse inequality}
\begin{equation*}
\sum_{k_1,k_2=1}^{\infty}\abs{a_k}^2\le c \int_{\omega}\abs{\sum_{k_1,k_2=1}^{\infty}a_ke^{i(zk_2x_2+\abs{k}^2t)}}
\end{equation*}
 was stated. However, the inverse inequality implies that the set of exponents has a uniform gap (see \cite[pp. 60, 154]{KomLorbook}, and the existence of a uniform gap implies the so-called \emph{direct inequality}
\begin{equation*}
\int_{\omega}\abs{\sum_{k_1,k_2=1}^{\infty}a_ke^{i(zk_2x_2+\abs{k}^2t)}}\le c \sum_{k_1,k_2=1}^{\infty}\abs{a_k}^2
\end{equation*}
(see \cite[p. 155]{KomLorbook}: the proof of the third step is valid for every bounded integration domain).
\end{itemize}
\end{remarks}

In the following proposition we denote by $B_R$ the open ball of radius $R$ in $\RR^N$, centered at the origin.

\begin{proposition}\label{p22}
(See \cite[Proposition 8.4, p. 161]{KomLorbook}) Let $(\omega_k)_{k\in K}$ be a family  of vectors in $\RR^N$, satisfying the uniform gap condition: 
\begin{equation*}
\inf\set{\abs{\omega_k-\omega_n}\ :\ k\ne n}>0.
\end{equation*}
Let 
$K=K_1\cup\dots\cup K_m$ be a partition of $K$, and assume that the estimates
\begin{equation*}
\int _{B_{R_j}}\Bigl\vert \sum_{k\in K_j}x_ke^{i\omega_k\cdot t}\Bigr\vert^2\ dt
\asymp\sum_{k\in K_j}\abs{x_k}^2,\quad j=1,\dots, m
\end{equation*}
hold  for all finite sums with complex coefficients $x_k$, with suitable
numbers $R_j$. Then we also have 
\begin{equation*}
\int _{B_R}\Bigl\vert \sum_{k\in K}x_ke^{i\omega_k\cdot t}\Bigr\vert^2\ dt
\asymp\sum_{k\in K}\abs{x_k}^2
\end{equation*}
for every $R>R_1+\dots+R_m$, for all sums with square-summable complex coefficients.
\end{proposition}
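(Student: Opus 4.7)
The direct inequality $\int_{B_R}|f|^2\le C\sum|x_k|^2$ is automatic: the uniform gap on $(\omega_k)_{k\in K}$ yields a Bessel-type bound on every bounded subset of $\RR^N$, exactly the principle recalled in the remark after Theorem \ref{t21}. Only the inverse inequality requires work. I would reduce it to the case $m=2$ by an easy induction on $m$: having the result for $m-1$ blocks, apply it to the sub-partition $K_2\cup\dots\cup K_m$ to obtain an Ingham-type equivalence on any ball whose radius slightly exceeds $R_2+\dots+R_m$, and then view $K$ as the two-block partition $K_1\cup(K_2\cup\dots\cup K_m)$.

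\textbf{Two-block case via the Gram matrix.} For $K=K_1\cup K_2$ write the Gram matrix of $(e^{i\omega_k\cdot t})_{k\in K}$ in $L^2(B_R)$ in block form,
\[
G^{(R)}=\begin{pmatrix} A & B \\ B^{*} & D \end{pmatrix},
\]
so that $\int_{B_R}|f|^2=\langle G^{(R)}x,x\rangle$. Because $B_{R_j}\subset B_R$, the hypothesis immediately forces $A\ge c_1 I$ and $D\ge c_2 I$ as quadratic forms, while the uniform gap on $K$ makes $B$ a bounded operator on $\ell^2$. The desired inverse inequality $G^{(R)}\ge\epsilon I$ amounts, via the Schur complement, to $D-B^{*}(A-\epsilon I)^{-1}B\ge\epsilon I$ for some $\epsilon>0$.

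\textbf{Using the condition $R>R_1+R_2$.} The sum condition enters through the possibility of choosing disjoint translates $B_{R_1}(y_1),\,B_{R_2}(y_2)\subset B_R$. Translating the biorthogonal systems of $\{e^{i\omega_l\cdot t}\}_{l\in K_j}$ on $B_{R_j}$ (whose existence is the dual reformulation of the block hypothesis) into these disjoint sub-balls produces candidates $\tilde\varphi_k\in L^2(B_R)$ that are biorthogonal to $(e^{i\omega_l\cdot t})_l$ inside each block but not across blocks. Writing $T$ for the bounded operator on $\ell^2(K)$ recording the cross-block defects (vanishing on the diagonal blocks), I would seek true biorthogonals for the full family in the form $(I+T)^{-1}\tilde\varphi$; a uniform $L^2$-bound on these would yield the inverse inequality.

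\textbf{Main obstacle.} The decisive step is the uniform invertibility of $I+T$ on $\ell^2(K)$, independently of the cardinality of $K$. A naïve Neumann series only works when $\|T\|<1$, which is not automatic and depends on the ratios of the hypothesis constants. My expectation is that the spatial separation of the supports $B_{R_j}(y_j)$ — available precisely because $R>R_1+R_2$ rather than merely $R>\max(R_1,R_2)$ — combined with the Bessel bound supplied by the uniform gap on $K$, forces the entries of $T$ to be small enough to close the estimate, either directly by Neumann series or through a Schur-complement refinement. Carrying out this quantitative control is where the full additive strength of the hypothesis on $R$ must be used, and it is the heart of the proof.
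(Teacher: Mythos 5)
First, a point of reference: the paper does not prove Proposition \ref{p22} at all --- it is quoted from \cite[Proposition 8.4]{KomLorbook}, so the only proof to compare against is the book's (essentially Kahane's translation--projection argument). Your reduction to $m=2$ by induction and your remark that the direct inequality is automatic from the uniform gap are both correct and standard. The difficulty is the two-block inverse inequality, and there your outline stops exactly at the decisive point while the mechanisms you propose for closing it do not work. From $A\ge c_1I$, $D\ge c_2I$ and $\norm{B}<\infty$ one cannot conclude $G^{(R)}\ge\eps I$ (already $A=D=B=I$ gives a singular $G$), so the Schur-complement route needs a quantitative input you do not identify. Likewise your hoped-for smallness of the cross-block operator $T$ has no source: its entries are pairings of \emph{globally supported} exponentials $e^{i\omega_l\cdot t}$ against compactly supported biorthogonals $\tilde\varphi_k$, and placing the supports of the two biorthogonal families in disjoint sub-balls of $B_R$ does nothing to shrink these numbers; the Bessel bound gives $\norm{T}\le C$, not $\norm{T}<1$. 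In fact the condition $R>R_1+R_2$ does not enter through disjoint placement of $B_{R_1}$ and $B_{R_2}$ inside $B_R$ at all.

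The missing idea is \emph{exact} annihilation of one block by orthogonal projection, combined with averaging over translations, which removes any need to invert a perturbed operator. Writing $f=f_1+f_2$ with $f_j=\sum_{k\in K_j}x_ke^{i\omega_k\cdot t}$, one notes that for each fixed $t$ the function $s\mapsto f_2(t+s)$ lies in $V_2:=\overline{\mathrm{span}}\set{e^{i\omega_n\cdot s}:\ n\in K_2}$ in $L^2(B_{R_2+\eps})$, so that $(I-Q)[f(t+\cdot)]=(I-Q)[f_1(t+\cdot)]$, where $Q$ is the orthogonal projection onto $V_2$. Integrating $\norm{(I-Q)[f_1(t+\cdot)]}^2_{L^2(B_{R_2+\eps})}$ over $t\in B_{R_1+\eps}$, the upper bound comes from $\int_{B_{R_1+\eps}}\int_{B_{R_2+\eps}}\abs{f(t+s)}^2\,ds\,dt\le c\int_{B_{R_1+R_2+2\eps}}\abs{f}^2\,dt$ --- this Minkowski sum of the two balls is where $R>R_1+R_2$ is used --- while the lower bound $c\sum_{k\in K_1}\abs{x_k}^2$ follows by applying, for each fixed $s$, the inverse inequality for $K_1$ on $B_{R_1}$ to the coefficients $x_kg_k(s)$ with $g_k:=(I-Q)e^{i\omega_k\cdot s}$, together with a uniform bound $\norm{g_k}\ge\delta>0$; that last bound is a separate ``adding one exponent to a Riesz sequence'' lemma exploiting the uniform gap between each $\omega_k$, $k\in K_1$, and the frequencies of $K_2$. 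Without this projection/translation device (or an equivalent substitute supplying the quantitative control you defer), the proposal cannot be completed as written.
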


\begin{lemma}\label{l23}
(See \cite[Lemma 2.3]{KomLor2011-152}) We have
\begin{equation*}
\abs{\sin kx}\ge {2}\dist\left( \frac{kx}{\pi},\ZZ\right) 
\end{equation*}
for all real numbers $x$ and integers $k$.
\end{lemma}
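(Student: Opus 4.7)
The inequality is a one-variable estimate, so I would reduce the two parameters $k$ and $x$ to a single real parameter. Setting $y := kx/\pi$, we have $\sin kx = \sin \pi y$, so the claim becomes $|\sin \pi y| \ge 2\dist(y,\ZZ)$ for every $y \in \RR$; the dependence on the integer $k$ disappears, and the original statement follows at once once this one-variable inequality is established.

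Choose an integer $n$ with $|y-n|=\dist(y,\ZZ)=:d$, so that $0 \le d \le 1/2$. Since $\sin$ is $2\pi$-periodic and odd, $|\sin \pi y| = |\sin \pi (y-n)| = \sin(\pi d)$. It therefore suffices to prove
\begin{equation*}
\sin(\pi d) \ge 2d \quad \text{for all } d \in [0,1/2].
\end{equation*}
The function $t \mapsto \sin t$ is concave on $[0,\pi/2]$, so its graph lies above the chord connecting $(0,0)$ and $(\pi/2,1)$, i.e., $\sin t \ge 2t/\pi$ on $[0,\pi/2]$. Specializing $t = \pi d$, which lies in $[0,\pi/2]$ precisely because $d \in [0,1/2]$, yields $\sin(\pi d) \ge 2d$, and the lemma follows.

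There is no real obstacle here; the only care required is in checking that $d$ indeed lies in $[0,1/2]$ (which is exactly the definition of $\dist(\cdot,\ZZ)$) so that the concavity-based linear lower bound on $\sin$ may be applied without sign issues.
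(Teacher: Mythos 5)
Your proof is correct: the reduction to the one-variable inequality $\sin(\pi d)\ge 2d$ on $[0,1/2]$ via Jordan's inequality (concavity of $\sin$ on $[0,\pi/2]$) is exactly the standard argument, and the paper itself gives no proof here, merely citing Lemma 2.3 of the earlier reference \cite{KomLor2011-152}, where the same elementary estimate is used. Nothing is missing.
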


\begin{theorem}\label{t24}
(See \cite[p. 74]{Cas}.) If the numbers $\theta_{1}$,\dots, $\theta_{M}$ and $1$ are linearly
independent over the field of rational numbers  and they belong to  a real algebraic field of degree $M+1$, then there exists a positive
constant $\gam$ such that
\begin{equation*}
\max_{1\le j\le M}\dist\left( k\theta_j,\ZZ\right) \ge\gam k^{-1/M}
\end{equation*}
for all positive integers $k$.
\end{theorem}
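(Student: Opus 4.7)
The strategy is to combine a Liouville-type lower bound for $\QQ$-linear forms in $1,\theta_1,\ldots,\theta_M$ with a pigeonhole/Minkowski transference step; this is the classical deduction of simultaneous bad approximability from the dual estimate. Throughout, let $K$ be the real algebraic field of degree $M+1$ from the hypothesis, so that $1,\theta_1,\ldots,\theta_M$ form a $\QQ$-basis of $K$, and let $\sigma_0=\mathrm{id},\sigma_1,\ldots,\sigma_M$ denote its embeddings into $\CC$.

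The first step is the Liouville-type inequality: there exists $c_0>0$ such that
\begin{equation*}
\abs{q_0+q_1\theta_1+\cdots+q_M\theta_M}\ge c_0\bigl(\max_{0\le i\le M}\abs{q_i}\bigr)^{-M}
\end{equation*}
for every nonzero integer tuple $(q_0,\ldots,q_M)$. Indeed, $\alpha:=q_0+\sum_j q_j\theta_j$ is a nonzero element of $K$; choosing a positive integer $D$ with each $D\theta_j$ an algebraic integer makes $D\alpha$ a nonzero algebraic integer, so $\abs{N_{K/\QQ}(D\alpha)}\ge 1$, i.e.\ $\prod_i\abs{\sigma_i(\alpha)}\ge D^{-(M+1)}$. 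Since $\abs{\sigma_i(\alpha)}\le C_K\max_j\abs{q_j}$ for $i\ne 0$, with $C_K$ depending only on the field, isolating the $i=0$ factor gives the claim.

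The second step is the transference. Fix $k\ge 1$ and let $p_j$ be the nearest integer to $k\theta_j$, so $\delta_j:=k\theta_j-p_j$ and $\eta:=\max_j\abs{\delta_j}=\max_j\dist(k\theta_j,\ZZ)\le 1/2$. Set $H:=\lceil k^{1/M}\rceil+1$, so $(2H+1)^M>k$. By the pigeonhole principle applied to the $k$ possible residues modulo $k$ of $\sum_j q_jp_j$ as $(q_1,\ldots,q_M)$ ranges over $[-H,H]^M\cap\ZZ^M$, two distinct such tuples agree modulo $k$; their difference is a nonzero integer tuple $(q_1,\ldots,q_M)$ with $\abs{q_j}\le 2H$ and $k\mid\sum_j q_jp_j$. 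Setting $q_0:=-\sum_j q_jp_j/k\in\ZZ$ and substituting $\theta_j=(p_j+\delta_j)/k$ produces the cancellation
\begin{equation*}
q_0+\sum_j q_j\theta_j=\frac{\sum_j q_j\delta_j}{k},
\end{equation*}
whose absolute value is at most $2MH\eta/k$. Meanwhile the crude bound $\abs{p_j}\le k(\abs{\theta_j}+1)$ gives $\abs{q_0}\le C_1H$, hence $\max_i\abs{q_i}\le C_2H$; Step 1 applied to the nonzero tuple $(q_0,\ldots,q_M)$ yields $c_0(C_2H)^{-M}\le 2MH\eta/k$, i.e.\ $\eta\ge c_3 kH^{-(M+1)}$. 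With $H\sim k^{1/M}$ this becomes $\eta\ge\gamma k^{-1/M}$, as required.

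The main obstacle I anticipate is the balancing act in choosing $H$: pigeonhole forces $H\gtrsim k^{1/M}$ in order to produce a nontrivial relation modulo $k$, while the Liouville bound $\eta\gtrsim k/H^{M+1}$ weakens as $H$ grows. The scaling $H\sim k^{1/M}$ is the unique meeting point of these two opposing constraints, and the exponent $1/M$ in the conclusion is a direct reflection of this optimization, mirroring Khintchine's classical transference from dual to simultaneous approximation specialized to the algebraic setting where the dual exponent is exactly $M$ by Liouville. A secondary point requiring care is that the tuple produced by pigeonhole is genuinely nonzero, so that the Liouville bound applies; this is guaranteed because the two tuples chosen from $[-H,H]^M\cap\ZZ^M$ are distinct by construction.
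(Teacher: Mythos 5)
Your proof is correct. The paper does not prove this statement at all --- it is quoted verbatim from Cassels' book with only a citation --- and your argument (a Liouville lower bound for the linear form $q_0+\sum q_j\theta_j$ via the norm of an algebraic integer, transferred to simultaneous approximation by a pigeonhole construction with $H\sim k^{1/M}$) is precisely the classical proof given in Cassels, Chapter V, where the transference between the linear-form and simultaneous problems is the main theme. All the delicate points are handled: the difference tuple from the pigeonhole step is nonzero, hence so is $(q_0,\ldots,q_M)$, so the Liouville bound applies, and the exponent bookkeeping $\eta\gtrsim k H^{-(M+1)}\sim k^{-1/M}$ is right.
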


Now we turn to the proof of Theorem \ref{t13}. First we observe that Theorem \ref{t21} remains valid by simple changes of variables if we change the expression $k_2x_2+\abs{k}^2t$ to $-k_2x_2+\abs{k}^2t$, $k_2x_2-\abs{k}^2t$ or to $-k_2x_2-\abs{k}^2t$. 

Next we observe that under the assumptions of Theorem \ref{t21} we also have the more general estimate
\begin{equation}\label{21}
\int_{\omega}\abs{f(t,x_2)}^2\asymp \sum_{k_1,k_2=1}^{\infty}\left( \abs{a_k}^2+\abs{b_k}^2+\abs{c_k}^2+\abs{d_k}^2\right) 
\end{equation} 
for all square summable families $(a_k)$,  $(b_k)$, $(c_k)$, $(d_k)$ of complex numbers, where we use the notation
\begin{multline}\label{22}
f(t,x_2):=\sum_{k_1,k_2=1}^{\infty}\left( a_ke^{i(zk_2x_2+\abs{k}^2t)}+ b_ke^{i(-zk_2x_2+\abs{k}^2t)}\right. \\
+\left. c_ke^{i(zk_2x_2-\abs{k}^2t)}+d_ke^{i(-zk_2x_2-\abs{k}^2t)}\right) .
\end{multline}
Indeed, applying Proposition \ref{p22} with $m=4$, 
\begin{align*}
\set{\omega_k\ :\ k\in K_1}&=\set{(zk_2,\abs{k}^2)\ :\ k_1,k_2=1,2,\ldots},\\
\set{\omega_k\ :\ k\in K_2}&=\set{(-zk_2,\abs{k}^2)\ :\ k_1,k_2=1,2,\ldots},\\
\set{\omega_k\ :\ k\in K_3}&=\set{(zk_2,-\abs{k}^2)\ :\ k_1,k_2=1,2,\ldots},\\
\set{\omega_k\ :\ k\in K_4}&=\set{(-zk_2,-\abs{k}^2)\ :\ k_1,k_2=1,2,\ldots}
\end{align*}
and with arbitrary positive numbers $R_j$ (this is possible in view of our first observation and because the union of exponents clearly has a uniform gap), we conclude that the estimates \eqref{21} hold for every disc centered at the origin. By translation invariance the estimates then hold for all discs (of arbitrary  center), and then they also hold for every non-empty bounded open set $\omega$ because there exist two discs $B_1$ and $B_2$ such that $B_1\subset\omega\subset B_2$.

For rectangular domains the solution of \eqref{11} has the following form:
\begin{equation}
\label{23}
u(t,x)
= \sum _{k_1,k_2=1}^\infty   \left (a_ke^{i(\frac{k_1^2\pi^2}{\ell_1^2}+\frac{k_2^2\pi^2}{\ell_2^2})t}+b_ke^{-i(\frac{k_1^2\pi^2}{\ell_1^2}+\frac{k_2^2\pi^2}{\ell_2^2})t}\right )\prod_{j=1}^2\sin \frac{k_jx_j\pi}{\ell_j}.
\end{equation} 
We note that the eigenvalue corresponding to the eigenfunction
\begin{equation*}
e_k(x)=\sin \frac{k_1x_1\pi}{\ell_1}\sin \frac{k_2x_2\pi}{\ell_2}
\end{equation*}
is equal to
\begin{equation*}
\lam_k=\frac{k_1^2\pi^2}{\ell_1^2}+\frac{k_2^2\pi^2}{\ell_2^2}.
\end{equation*}

Using formula \eqref{23} and the definition of the $s$-norms, a direct computation yields the well-known energy estimate
\begin{equation}\label{24}
\norm{u_0}_{s}^2+\norm{u_1}_{s-2}^2\asymp \sum _{k_1,k_2=1}^\infty \lam_k^{s}\left( \abs{a_k}^2+\abs{b_k}^2\right).
\end{equation}

Next, setting $z=\pi/\ell_2$, $u=\pi^2/\ell_1^2$, $v=\pi^2/\ell_2^2$ and 
\begin{equation*}
 \abs{k}^2=uk_1^2+vk_2^2 (=\lam_k),
\end{equation*}
we may rewrite \eqref{23} as follows:
\begin{multline*}
u(t,x):=\frac{1}{2i}\sum_{k_1,k_2=1}^{\infty}\left( a_k\sin\frac{k_1x_1\pi}{\ell_1}\right) \left( e^{i(zk_2x_2+\abs{k}^2t)}-e^{i(-zk_2x_2+\abs{k}^2t)}\right) \\
+\left( b_k\sin\frac{k_1x_1\pi}{\ell_1}\right) \left( e^{i(zk_2x_2-\abs{k}^2t)}-e^{i(-zk_2x_2-\abs{k}^2t)}\right) .
\end{multline*}
Applying the estimate \eqref{21} for each $j=1,\ldots, M$ for the function $f(t,x_2):=u(t,\alpha_j,x_2)$, we obtain that
\begin{equation*}
\int_0^T\int_{I_j}\abs{u(t,\alpha_j,x_2)}^2\ dx_2\ dt\asymp \sum_{k_1,k_2=1}^{\infty}\left( \abs{a_k}^2+\abs{b_k}^2\right)\sin^2\frac{k_1\alpha_j\pi}{\ell_1} .
\end{equation*}
Summarizing for $j$ we get
\begin{equation*}
\sum_{j=1}^M\int_0^T\int_{I_j}\abs{u(t,\alpha_j,x_2)}^2\ dx_2\ dt\asymp \sum_{k_1,k_2=1}^{\infty}\left( \abs{a_k}^2+\abs{b_k}^2\right)\sum_{j=1}^M\sin^2\frac{k_1\alpha_j\pi}{\ell_1}.
\end{equation*}
In view of \eqref{24} the theorem will follows if we show the estimate
\begin{equation*}
\sum_{j=1}^M\sin^2\frac{k_1\alpha_j\pi}{\ell_1}\ge c \lam_k^{-1/M}.
\end{equation*}
This follows from Lemma \ref{l23} and Theorem \ref{t24} as in \cite{KomLor2011-152}:
\begin{multline*}
\sum_{j=1}^M\sin^2\frac{k_1\alpha_j\pi}{\ell_1}\ge 4\sum_{j=1}^M\dist\left( \frac{k_1\alpha_j}{\ell_1},\ZZ\right)^2 \ge 
4\gam^2 k_1^{-2/M}\\
\ge 4\gam^2 \left( k_1^2+k_2^2\right) ^{-1/M}\asymp \lam_k^{-1/M}.
\end{multline*}

\section{Proof of Theorems \ref{t15} and \ref{t16}}\label{s3}

For the reader's convenience we also reprove Theorem \ref{t15} by providing a slightly better condition on $T$.

Ingham's theorem \cite{Ing1936} and its various generalizations are very useful in control theory; see, e.g., \cite{BaiKomLor103}, \cite{BaiKomLor111}, \cite{KomLorbook}, \cite{Lor2004}, \cite{LorMeh}, \cite{LorVal}. Our proofs are based on a recent  generalization  that we recall now. Let $(\omega_k)_{k=-\infty}^{\infty}$ be a sequence of real numbers, satisfying for some nonnegative integer $n$ and for some positive real number $\gamma$ the following \emph{partial gap condition}:
\begin{equation}\label{31}
\abs{\omega_{k'}-\omega_k}\ge\abs{k'-k}\gamma\quad\text{whenever}\quad \max\set{\abs{k'},\abs{k}}\ge n.
\end{equation} 

\begin{theorem}\label{t31}
(Mehrenberger \cite{Meh2009}) Assume \eqref{31} and let $T>\frac{2\pi}{\gamma}$. Then the following inequality holds for all square summable sequences $(a_k)_{k=-\infty}^{\infty}$ of complex numbers:
\begin{multline}
\label{32}
\int_0^T\abs{\sum_{k=-\infty}^{\infty} a_ke^{i\omega_kt}}^2\ dt
\ge \frac{2T}{\pi}\left( \sum_{\abs{k}\ge n}\abs{a_k}^2 - \left( \frac{2\pi}{T\gamma}\right)^2\sum_{k=-\infty}^{\infty}\abs{a_k}^2\right).
\end{multline} 
\end{theorem}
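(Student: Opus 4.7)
The plan is to adapt the classical Ingham argument by testing against an explicit non-negative kernel supported on $[0,T]$. I would take
\[
K(t) := \sin\left(\frac{\pi t}{T}\right) \cdot \mathbf{1}_{[0,T]}(t),
\]
which satisfies $0 \le K \le 1$, and whose Fourier integral
\[
\tilde K(\omega) := \int_0^T K(t) e^{i\omega t}\, dt = \frac{\pi T(1+e^{i\omega T})}{\pi^2 - \omega^2 T^2}
\]
has value $\tilde K(0) = 2T/\pi$ at the origin and satisfies the decay estimate $|\tilde K(\omega)| \le 2\pi T/|\omega^2 T^2 - \pi^2|$ for $|\omega| > \pi/T$. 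Writing $S(t) := \sum_k a_k e^{i\omega_k t}$, the pointwise inequality $K \le 1$ gives $\int_0^T |S|^2 \ge \int_0^T K |S|^2$, and expanding the square rewrites the right-hand side as the double sum $\sum_{k,k'} a_k \bar a_{k'} \tilde K(\omega_k - \omega_{k'})$.

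Next I would split this double sum into three groups. The diagonal with $|k| \ge n$ contributes exactly $\frac{2T}{\pi}\sum_{|k|\ge n}|a_k|^2$. The off-diagonal part $\mathrm{Off}$ consists of pairs with $k \ne k'$ and $\max\set{|k|,|k'|} \ge n$, and will be estimated below. The remaining sum over $|k|,|k'| < n$ equals $\int_0^T K(t)\abs{\sum_{|k|<n} a_k e^{i\omega_k t}}^2\,dt$, which is nonnegative because $K \ge 0$ and may therefore be dropped. The task thus reduces to showing that $|\mathrm{Off}|$ is bounded by the subtractive term of \eqref{32}.

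For every pair contributing to $\mathrm{Off}$ the partial gap \eqref{31} yields $|\omega_k - \omega_{k'}| \ge m\gamma$ with $m := |k-k'| \ge 1$, whence $|\tilde K(\omega_k - \omega_{k'})| \le 2\pi T/(m^2\gamma^2 T^2 - \pi^2)$. The hypothesis $T \ge 2\pi/\gamma$ implies $\pi^2 \le T^2\gamma^2/4$, so $m^2\gamma^2 T^2 - \pi^2 \ge T^2\gamma^2(m^2 - 1/4)$. The telescoping identity $\sum_{m \ge 1}(m^2 - 1/4)^{-1} = 2$ then gives, uniformly in $k$,
\[
\sum_{k' \ne k} \abs{\tilde K(\omega_k - \omega_{k'})} \le \frac{8\pi}{T\gamma^2},
\]
and the symmetric trick $2|a_k a_{k'}| \le |a_k|^2 + |a_{k'}|^2$ then produces
\[
|\mathrm{Off}| \le \frac{8\pi}{T\gamma^2}\sum_k |a_k|^2 = \frac{2T}{\pi}\left(\frac{2\pi}{T\gamma}\right)^2 \sum_k |a_k|^2,
\]
which is exactly the correction term in \eqref{32}.

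The main obstacle I expect is ensuring uniformity of the last estimate in $k$: for $|k|<n$ only the indices $k'$ with $|k'| \ge n$ contribute to $\mathrm{Off}$, whereas for $|k|\ge n$ every $k' \ne k$ contributes. What rescues us in both cases is that \eqref{31} controls $|\omega_k-\omega_{k'}|$ by $|k-k'|\gamma$ regardless of which of the two indices is the large one, so the sum over shifts is majorised by the same $\sum_{m\ge 1}$ in either situation. This feature of the partial gap is what produces the clean single correction term involving $\sum |a_k|^2$ rather than a more complicated expression.
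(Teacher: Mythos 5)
Your proof is correct, and it is essentially the standard argument: Ingham's kernel $\sin(\pi t/T)\mathbf{1}_{[0,T]}$ with the quadratic form split so that the low-frequency block ($\abs{k},\abs{k'}<n$) is dropped as a nonnegative term and the off-diagonal pairs covered by the partial gap \eqref{31} are absorbed into the correction term — which is precisely Mehrenberger's adaptation of Ingham's proof. Note that the paper itself does not prove Theorem \ref{t31}; it only quotes it from \cite{Meh2009}, so your argument supplies the (standard) proof rather than diverging from one given in the text.
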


We also recall from Mehrenberger \cite{Meh2009} the following lemma:

\begin{lemma}\label{l32}
If $k_1, k_1', k_2$ are positive integers satisfying  $\max\set{k_1,k_1'}\ge k_2$, then
\begin{equation*}
\abs{\sqrt{k_1^2+k_2^2}-\sqrt{(k_1')^2+k_2^2}}\ge\frac{\abs{k_1-k_1'}}{2\sqrt{2}}.
\end{equation*}
Furthermore, we have also the trivial inequality
\begin{equation*}
\sqrt{1+k_2^2}-\left( -\sqrt{1+k_2^2}\right)\ge\frac{1}{2\sqrt{2}}.
\end{equation*}
\end{lemma}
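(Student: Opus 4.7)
The plan is to prove the first inequality by rationalization and reduce the second to an obvious bound.

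For the main inequality, I would use the algebraic identity $\sqrt{a} - \sqrt{b} = (a-b)/(\sqrt{a}+\sqrt{b})$ with $a = k_1^2 + k_2^2$ and $b = (k_1')^2 + k_2^2$, giving the exact factorization
\begin{equation*}
\sqrt{k_1^2+k_2^2} - \sqrt{(k_1')^2+k_2^2} = \frac{(k_1-k_1')(k_1+k_1')}{\sqrt{k_1^2+k_2^2} + \sqrt{(k_1')^2+k_2^2}}.
\end{equation*}
After taking absolute values, the factor $\abs{k_1-k_1'}$ on the right matches exactly the quantity appearing in the desired lower bound, so the problem reduces to showing that
\begin{equation*}
\frac{k_1+k_1'}{\sqrt{k_1^2+k_2^2} + \sqrt{(k_1')^2+k_2^2}} \ge \frac{1}{2\sqrt{2}}.
\end{equation*}

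To control this ratio I would assume without loss of generality that $k_1 = \max\{k_1, k_1'\}$, so the hypothesis becomes $k_1 \ge k_2$. Then $k_2 \le k_1$ forces $\sqrt{k_1^2+k_2^2} \le \sqrt{2}\,k_1$; similarly $\sqrt{(k_1')^2 + k_2^2} \le \sqrt{k_1^2 + k_1^2} = \sqrt{2}\,k_1$, so the denominator is at most $2\sqrt{2}\,k_1$. The numerator satisfies $k_1 + k_1' \ge k_1$ since $k_1' \ge 1$, and the ratio is therefore at least $k_1/(2\sqrt{2}\,k_1) = 1/(2\sqrt{2})$.

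For the second inequality, the left-hand side simply equals $2\sqrt{1+k_2^2} \ge 2$, which is comfortably larger than $1/(2\sqrt{2})$. I do not anticipate a genuine obstacle: the lemma is elementary, and the only point of care is choosing the WLOG ordering so that the hypothesis $\max\{k_1,k_1'\} \ge k_2$ can be invoked to bound \emph{both} square roots in the denominator by $\sqrt{2}\,k_1$. The two cases together are what Mehrenberger uses to separate the gap estimates for the pair of sequences $\pm\sqrt{k_1^2+k_2^2}$ that arise as eigenfrequencies of the wave equation on the square.
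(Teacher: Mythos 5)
Your proof is correct: the rationalization identity, the WLOG reduction to $k_1=\max\{k_1,k_1'\}\ge k_2$, and the bounds $\sqrt{k_1^2+k_2^2},\sqrt{(k_1')^2+k_2^2}\le\sqrt{2}\,k_1$ together with $k_1+k_1'\ge k_1$ give exactly the stated constant $1/(2\sqrt2)$, and the second inequality is indeed immediate. The paper does not reproduce a proof (it only cites Mehrenberger's note), but your argument is the standard one for this lemma, so there is nothing further to compare.
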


Combining these two results we obtain the following corollary where we still use the notations
\begin{equation*}
k=(k_1,k_2)\quad\text{and}\quad \abs{k}:=\sqrt{k_1^2+k_2^2}:
\end{equation*}

\begin{corollary}\label{c33}
If $T>4\sqrt{2}\pi$, then for any fixed positive integer $k_2$ we have the following inequality for all square summable sequences $(a_k)$ and $(b_k)$ of complex numbers:
\begin{multline}\label{33}
\int_0^T\abs{\sum_{k_1=1}^{\infty}a_ke^{i\abs{k}t}+b_ke^{-i\abs{k}t}}^2\ dt\\
\ge \left( \frac{2T}{\pi}-\frac{64\pi}{T}\right)\sum_{k_1\ge k_2} \left( \abs{a_k}^2+\abs{b_k}^2\right)
-\frac{64\pi}{T}\sum_{k_1<k_2} \left( \abs{a_k}^2+\abs{b_k}^2\right).
\end{multline}
\end{corollary}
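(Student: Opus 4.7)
The plan is to derive Corollary \ref{c33} from Theorem \ref{t31} after a two-sided reindexing of the exponents. Fix the positive integer $k_2$ and define, for $m\in\ZZ\setminus\set{0}$,
\[
\omega_m=\sqrt{m^2+k_2^2}\quad\text{if}\quad m\ge 1,\qquad \omega_m=-\sqrt{m^2+k_2^2}\quad\text{if}\quad m\le -1,
\]
with coefficients $c_m=a_{(m,k_2)}$ for $m\ge 1$ and $c_m=b_{(-m,k_2)}$ for $m\le -1$. Then
\[
\sum_{k_1=1}^{\infty}\left( a_ke^{i\abs{k}t}+b_ke^{-i\abs{k}t}\right)=\sum_{m\ne 0}c_me^{i\omega_m t},
\]
which puts the left-hand side of \eqref{33} into the form treated by Mehrenberger's theorem.

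Next I would verify the partial gap condition \eqref{31} with $\gamma=1/(2\sqrt{2})$ and $n=k_2$. For same-sign pairs $m,m'$ with $\max(\abs{m},\abs{m'})\ge k_2$, the first part of Lemma \ref{l32} gives $\abs{\omega_m-\omega_{m'}}\ge\abs{m-m'}/(2\sqrt{2})$ directly. For opposite-sign pairs, say $m\ge 1$ and $m'\le -1$,
\[
\abs{\omega_m-\omega_{m'}}=\sqrt{m^2+k_2^2}+\sqrt{(m')^2+k_2^2}\ge m+\abs{m'}=\abs{m-m'},
\]
which is strictly stronger than the required bound; the extreme case $m=-m'=1$ is covered by the trivial inequality in Lemma \ref{l32}. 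With the chosen $\gamma$ and $n$, condition \eqref{31} is satisfied.

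Since $T>4\sqrt{2}\pi=2\pi/\gamma$, Theorem \ref{t31} applies and yields
\[
\int_0^T\abs{\sum_{m\ne 0}c_me^{i\omega_m t}}^2\,dt\ge\frac{2T}{\pi}\sum_{\abs{m}\ge k_2}\abs{c_m}^2-\frac{2T}{\pi}\left(\frac{2\pi}{T\gamma}\right)^2\sum_{m\ne 0}\abs{c_m}^2.
\]
Computing $(2\pi/(T\gamma))^2=32\pi^2/T^2$ turns the second term into $-(64\pi/T)\sum_{m\ne 0}\abs{c_m}^2$. Splitting this as $\sum_{\abs{m}\ge k_2}+\sum_{\abs{m}<k_2}$ and regrouping gives exactly the coefficient $\frac{2T}{\pi}-\frac{64\pi}{T}$ in front of the high-frequency part and $-\frac{64\pi}{T}$ in front of the low-frequency part, and translating back via $\abs{m}\ge k_2\Leftrightarrow k_1\ge k_2$ recovers \eqref{33}. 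The work is essentially bookkeeping; the only conceptual point is the opposite-sign gap check, which causes no trouble because the two blocks $\set{\omega_m}_{m\ge 1}$ and $\set{\omega_m}_{m\le-1}$ are separated by at least $2\sqrt{1+k_2^2}$.
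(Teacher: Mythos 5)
Your proof is correct and is exactly the argument the paper intends: the corollary is stated there as an immediate combination of Theorem \ref{t31} and Lemma \ref{l32}, with the same reindexing over $\pm\sqrt{k_1^2+k_2^2}$, the same choice $\gamma=1/(2\sqrt{2})$, $n=k_2$, and the same arithmetic $(2\pi/(T\gamma))^2\cdot\frac{2T}{\pi}=\frac{64\pi}{T}$. Your explicit verification of the opposite-sign gap fills in a detail the paper leaves implicit.
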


Given any interval $(a,b)\subset (0,\pi)$ we set
\begin{equation}\label{34}
m_{a,b}:=\inf_n\int_a^b\sin^2 (ny)\ dy,
\end{equation} 
where $n$ runs over the positive integers. Then we have (see, e.g., \cite[p. 1548]{KomLor2011-152})
\begin{equation}\label{35}
0<m_{a,b}\le \int_a^b \sin^2 (ny)\ dy\le\frac{\pi}{2},
\quad n=1,2,\ldots 
\end{equation} 

Now we are ready for the proof of Theorem \ref{t15}.  The solutions are now given by the formulas

\begin{equation}\label{36}
u(t,x_1,x_2)=\sum_{k_1=1}^{\infty}\sum_{k_2=1}^{\infty}\left( a_ke^{i\abs{k}t}+b_{k}e^{-i\abs{k}t}\right)\sin (k_1x_1)\sin (k_2x_2),
\end{equation} 
where the complex coefficients satisfy the equality
\begin{equation}\label{37}
\frac{\pi^2}{2}\sum_{k_1=1}^{\infty}\sum_{k_2=1}^{\infty}\abs{k}^2\left( \abs{a_k}^2+\abs{b_k}^2\right)=\int_{\Omega}\abs{\nabla u_0}^2+\abs{u_1}^2\ dx.
\end{equation}

\begin{proof}[Proof of the estimate \eqref{17}]
Using the representation \eqref{36} of the solution and applying the preceding corollary  we obtain for $T>4\sqrt{2}\pi$ the following inequalities:
\begin{align*}
\int_0^T\int_0^{\pi}&\abs{u'(t,x_1,x_2)}^2\ dx_2\ dt\\
&=\int_0^T\int_0^{\pi}\abs{\sum_{k_1=1}^{\infty}\sum_{k_2=1}^{\infty}i\abs{k}\left( a_ke^{i\abs{k}t}-b_{k}e^{-i\abs{k}t}\right)\sin (k_1x_1)\sin (k_2x_2)}^2\ dx_2\ dt\\
&=\frac{\pi}{2}\sum_{k_2=1}^{\infty}\int_0^T\abs{\sum_{k_1=1}^{\infty}i\abs{k}\left( a_ke^{i\abs{k}t}-b_{k}e^{-i\abs{k}t}\right)\sin (k_1x_1)}^2\ dx_2\ dt\\
&\ge \frac{\pi}{2}\sum_{k_2=1}^{\infty}\left( \frac{2T}{\pi}-\frac{64\pi}{T}\right)\sum_{k_1=k_2}^{\infty}\abs{k}^2\left( \abs{a_k}^2 +\abs{b_k}^2 \right)\sin^2 (k_1x_1)\\
&\qquad\qquad -\frac{\pi}{2}\sum_{k_2=1}^{\infty}\frac{64\pi}{T}\sum_{k_1=1}^{k_2-1}\abs{k}^2\left( \abs{a_k}^2 +\abs{b_k}^2 \right)\sin^2 (k_1x_1)\\
&= \left( T-\frac{32\pi^2}{T}\right)\sum_{k_2=1}^{\infty}\sum_{k_1=k_2}^{\infty}\abs{k}^2\left( \abs{a_k}^2 +\abs{b_k}^2 \right)\sin^2 (k_1x_1)\\
&\qquad\qquad -\frac{32\pi^2}{T}\sum_{k_2=1}^{\infty}\sum_{k_1=1}^{k_2-1}\abs{k}^2\left( \abs{a_k}^2 +\abs{b_k}^2 \right)\sin^2 (k_1x_1).
\end{align*}
(In the last two sums $k=(k_1,k_2)$ runs over all couples satisfying the indicated inequality.) 

Using the inequalities \eqref{35} it follows that 
\begin{multline}\label{38}
\int_0^T\int_a^b\int_0^{\pi}\abs{u'(t,x_1,x_2)}^2\ dx_2\ dx_1\ dt\\
\ge \left( T-\frac{32\pi^2}{T} \right)m_{a,b}\sum_{k_2=1}^{\infty}\sum_{k_1=k_2}^{\infty}\abs{k}^2\left( \abs{a_k}^2 +\abs{b_k}^2 \right)\\
-\frac{16\pi^3}{T}\sum_{k_2=1}^{\infty}\sum_{k_1=1}^{k_2-1}\abs{k}^2\left( \abs{a_k}^2 +\abs{b_k}^2 \right).
\end{multline}

By symmetry we also have 
\begin{multline}\label{39}
\int_0^T\int_0^{\pi}\int_c^d\abs{u'(t,x_1,x_2)}^2\ dx_2\ dx_1\ dt\\
\ge \left( T-\frac{32\pi^2}{T} \right)m_{c,d}\sum_{k_1=1}^{\infty}\sum_{k_2=k_1}^{\infty}\abs{k}^2\left( \abs{a_k}^2 +\abs{b_k}^2 \right)\\
-\frac{16\pi^3}{T}\sum_{k_1=1}^{\infty}\sum_{k_2=1}^{k_1-1}\abs{k}^2\left( \abs{a_k}^2 +\abs{b_k}^2 \right).
\end{multline}
In case $T^2\ge 32\pi^2$ this implies the following, slightly weaker inequality:
\begin{multline}\label{310}
\int_0^T\int_0^{\pi}\int_c^d\abs{u'(t,x_1,x_2)}^2\ dx_2\ dx_1\ dt\\
\ge \left( T-\frac{32\pi^2}{T} \right)m_{c,d}\sum_{k_1=1}^{\infty}\sum_{k_2=k_1+1}^{\infty}\abs{k}^2\left( \abs{a_k}^2 +\abs{b_k}^2 \right)\\
-\frac{16\pi^3}{T}\sum_{k_1=1}^{\infty}\sum_{k_2=1}^{k_1}\abs{k}^2\left( \abs{a_k}^2 +\abs{b_k}^2 \right).
\end{multline}

Now adding \eqref{38} to \eqref{310} we obtain that 
\begin{multline}\label{311}
\int_0^T\int_a^b\int_0^{\pi}\abs{u'(t,x_1,x_2)}^2\ dx_2\ dx_1\ dt\\
+\int_0^T\int_0^{\pi}\int_c^d\abs{u'(t,x_1,x_2)}^2\ dx_2\ dx_1\ dt\\
\ge \left( Tm_{a,b}-\frac{32\pi^2}{T}m_{a,b} -\frac{16\pi^3}{T}\right)\sum_{k_2=1}^{\infty}\sum_{k_1=k_2}^{\infty}\abs{k}^2\left( \abs{a_k}^2 +\abs{b_k}^2 \right)\\
+ \left( Tm_{c,d} -\frac{32\pi^2}{T}m_{c,d} -\frac{16\pi^3}{T}\right)\sum_{k_2=1}^{\infty}\sum_{k_1=1}^{k_2-1}\abs{k}^2\left( \abs{a_k}^2 +\abs{b_k}^2 \right).
\end{multline}
Setting
\begin{equation}\label{312}
m:=\min\set{m_{a,b},m_{c,d}}
\end{equation}
we conclude that
\begin{multline}\label{311new}
\int_0^T\int_a^b\int_0^{\pi}\abs{u'(t,x_1,x_2)}^2\ dx_2\ dx_1\ dt\\
+\int_0^T\int_0^{\pi}\int_c^d\abs{u'(t,x_1,x_2)}^2\ dx_2\ dx_1\ dt\\
\ge \left( Tm -\frac{32\pi^2}{T}m  -\frac{16\pi^3}{T}\right)\sum_{k_1=1}^{\infty}\sum_{k_2=1}^{\infty}\abs{k}^2\left( \abs{a_k}^2 +\abs{b_k}^2 \right).
\end{multline}

If $T>0$ is chosen such that
\begin{equation}\label{313}
T^2> 32\pi^2+\frac{16\pi^3}{m},
\end{equation}
then in view of \eqref{37} we get \eqref{17} with
\begin{equation}\label{314}
c=\frac{2m}{\pi^2T}\left(T^2- 32\pi^2+\frac{16\pi^3}{m}\right)>0.\qedhere
\end{equation} 
\end{proof}

\begin{proof}[Proof of the estimate \eqref{18}]
Proceeding as in the preceding proof, we obtain for $T>4\sqrt{2}\pi$ the following inequalities:
\begin{align*}
\int_0^T\int_0^{\pi}&\abs{u_{x_2}(t,x_1,0)}^2\ dx_1\ dt\\
&=\int_0^T\int_0^{\pi}\abs{\sum_{k_1=1}^{\infty}\sum_{k_2=1}^{\infty}k_2\left( a_ke^{i\abs{k}t}+b_{k}e^{-i\abs{k}t}\right)\sin (k_1x_1)}^2\ dx_1\ dt\\
&=\frac{\pi}{2}\sum_{k_1=1}^{\infty}\int_0^T\abs{\sum_{k_2=1}^{\infty}\left( k_2a_ke^{i\abs{k}t}+k_2b_{k}e^{-i\abs{k}t}\right)}^2\ dt\\
&\ge \frac{\pi}{2}\sum_{k_1=1}^{\infty}\left( \frac{2T}{\pi}-\frac{64\pi}{T}\right)\sum_{k_2\ge k_1}k_2^2\left( \abs{a_k}^2 +\abs{b_k}^2 \right)\\
&\qquad\qquad -\frac{\pi}{2}\sum_{k_1=1}^{\infty}\frac{64\pi}{T}\sum_{k_2<k_1}k_2^2\left( \abs{a_k}^2 +\abs{b_k}^2 \right) \\
&=\left( T-\frac{32\pi^2}{T}\right)\sum_{k_1=1}^{\infty}\sum_{k_2\ge k_1}k_2^2\left( \abs{a_k}^2 +\abs{b_k}^2 \right) \\
&\qquad\qquad -\frac{32\pi^2}{T}\sum_{k_1=1}^{\infty}\sum_{k_2<k_1}k_2^2\left( \abs{a_k}^2 +\abs{b_k}^2 \right) \\
&\ge \left( T-\frac{32\pi^2}{T}\right)\sum_{k_1=1}^{\infty}\sum_{k_2> k_1}k_2^2\left( \abs{a_k}^2 +\abs{b_k}^2 \right) \\
&\qquad\qquad -\frac{32\pi^2}{T}\sum_{k_1=1}^{\infty}\sum_{k_2\le k_1}k_2^2\left( \abs{a_k}^2 +\abs{b_k}^2 \right) .
\end{align*}
Since $k_2^2\ge\abs{k}^2/2$ if $k_1\ge k_2$, and $k_2^2\le\abs{k}^2/2$ if $k_1< k_2$, it follows that 
\begin{multline}\label{315}
\int_0^T\int_0^{\pi}\abs{u_{x_2}(t,x_1,0)}^2\ dx_1\ dt\\
\ge \left( \frac{T}{2}-\frac{16\pi^2}{T}\sum_{k_1=1}^{\infty}\right)\sum_{k_2> k_1}\abs{k}^2\left( \abs{a_k}^2 +\abs{b_k}^2 \right) \\
-\frac{32\pi^2}{T}\sum_{k_1=1}^{\infty}\sum_{k_2\le k_1}\abs{k}^2\left( \abs{a_k}^2 +\abs{b_k}^2 \right) .
\end{multline}

Adding \eqref{38} to \eqref{315} we obtain that
\begin{multline*}
\int_0^T\int_a^b\int_0^{\pi}\abs{u'(t,x_1,x_2)}^2\ dx_2\ dx_1\ dt
+\int_0^T\int_0^{\pi}\abs{\partial_{\nu}(t,x_1,0)}^2 \ dx_1\ dt\\
\ge \left( \frac{T}{2}-\frac{16\pi^2}{T}-\frac{16\pi^3}{T}\right)\sum_{k_1=1}^{\infty}\sum_{k_2> k_1}\abs{k}^2\left( \abs{a_k}^2 +\abs{b_k}^2 \right) \\
+\left( Tm_{a,b}-\frac{32\pi^2m_{a,b}}{T}-\frac{32\pi^2}{T}\right)\sum_{k_1=1}^{\infty}\sum_{k_2\le k_1}\abs{k}^2\left( \abs{a_k}^2 +\abs{b_k}^2 \right).
\end{multline*}
If $T>0$ satisfies
\begin{equation*}
T^2>\max\set{32\pi^2+32\pi^3,32\pi^2+\frac{32\pi^2}{m_{a,b}}},
\end{equation*} 
then in view of \eqref{37} the estimate \eqref{18} follows with 
\begin{equation*}
c:=\frac{1}{\pi^2T}\min\set{T^2-32\pi^2-32\pi^3,2T^2m_{a,b}-64\pi^2m_{a,b}-64\pi^2}.\qedhere
\end{equation*} 
\end{proof}

\section{Proof of Theorem \ref{t17}}\label{s4}

We start with a lemma. In order to state it in a convenient way, we extend  $f\in L^1(0,\pi)$ to a $2\pi$-periodic odd function:

\begin{lemma}\label{l41}
A locally integrable, $2\pi$-periodic odd function $f$ satisfies for some positive integer $p$ the relations
\begin{equation}\label{41}
\int_0^{\pi}f(x)\sin(pmx)\ dx=0,\quad\text{for all}\quad  m=1,2,\ldots
\end{equation}
if and only if it is $p$-symmetric, i.e.,
\begin{equation}\label{42}
\sum_{k=1}^pf\left( t+\frac{2k\pi}{p}\right)=0
\end{equation}
for almost every $t\in\RR$.
\end{lemma}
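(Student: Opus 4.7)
My plan is to translate both conditions into statements about the Fourier coefficients of $f$ (viewed as a $2\pi$-periodic function on $\RR$) and to invoke the uniqueness theorem for $L^1$ Fourier series.

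Since $f$ is real, $2\pi$-periodic and odd, its complex Fourier coefficient $\hat f(n)=\frac{1}{2\pi}\int_{-\pi}^{\pi}f(x)e^{-inx}\,dx$ satisfies $\hat f(0)=0$ and
\begin{equation*}
\hat f(n)=\frac{-i}{\pi}\int_0^{\pi}f(x)\sin(nx)\,dx,\qquad \hat f(-n)=-\hat f(n).
\end{equation*}
So the vanishing conditions \eqref{41} are exactly equivalent to $\hat f(pm)=0$ for every integer $m$.

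Now set $g(t):=\sum_{k=1}^{p}f\bigl(t+\tfrac{2k\pi}{p}\bigr)$; this is again $2\pi$-periodic and locally integrable. A change of variable, using the $2\pi$-periodicity of $f$ to shift the integration intervals back to $(-\pi,\pi)$, gives
\begin{equation*}
\hat g(n)=\sum_{k=1}^{p}e^{2\pi ink/p}\,\hat f(n)=\hat f(n)\sum_{k=1}^{p}e^{2\pi ink/p}.
\end{equation*}
The geometric sum equals $p$ when $p\mid n$ and $0$ otherwise, so $\hat g(n)=p\,\hat f(n)$ if $p\mid n$, and $\hat g(n)=0$ if $p\nmid n$.

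Combining the two computations, $\hat g(n)=0$ for every $n\in\ZZ$ if and only if $\hat f(pm)=0$ for every $m\in\ZZ$, if and only if \eqref{41} holds. By the uniqueness theorem for Fourier series on $L^1(\RR/2\pi\ZZ)$, $\hat g\equiv 0$ is equivalent to $g=0$ almost everywhere, which is precisely the $p$-symmetry condition \eqref{42}. The only subtlety is not trying to use pointwise Fourier inversion (which would require more regularity than $L^1$); invoking only the $L^1$ uniqueness theorem avoids this issue.
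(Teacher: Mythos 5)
Your proof is correct and follows essentially the same route as the paper's: both reduce \eqref{41} to the vanishing of the complex Fourier coefficients $\hat f(pm)$ for all integers $m$, identify this with the vanishing of all Fourier coefficients of the symmetrized function, and conclude by the $L^1$ uniqueness theorem. The only difference is cosmetic --- you obtain the key identity from the translation property together with the roots-of-unity filter $\sum_{k=1}^{p}e^{2\pi ink/p}$, whereas the paper derives the equivalent identity by substituting $z=py$ and splitting $[0,2p\pi]$ into $p$ subintervals; also, the realness of $f$ that you invoke is never actually needed.
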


\begin{proof}
Since $f$ is odd, we have for each positive integer $m$ the following equality:
\begin{equation}\label{43}
 \int_0^{\pi}f(x)\sin(pmx)\ dx=\frac{1}{2i}\int_{-\pi}^{\pi}f(y)e^{impy}\ dy.
\end{equation} 
Indeed, we have 
\begin{align*}
 \int_0^{\pi}f(x)\sin(pmx)\ dx
&=\frac{1}{2i}\int_{0}^{\pi}f(x)e^{impx}\ dx-\frac{1}{2i}\int_{0}^{\pi}f(x)e^{-impx}\ dx\\
&=\frac{1}{2i}\int_{0}^{\pi}f(x)e^{impx}\ dx+\frac{1}{2i}\int_{0}^{-\pi}f(-y)e^{impy}\ dy\\
&=\frac{1}{2i}\int_{0}^{\pi}f(x)e^{impx}\ dx+\frac{1}{2i}\int^{0}_{-\pi}-f(-y)e^{impy}\ dy;
\end{align*}
since $-f(-y)=f(y)$, the equality \eqref{43} follows.

It follows that the conditions \eqref{41} are equivalent to the conditions 
\begin{equation*}
\int_{-\pi}^{\pi}f(y)e^{impy}\ dy=0\quad\text{for all}\quad  m=1,2,\ldots
\end{equation*}

Since $f$ is odd, this is equivalent to 
\begin{equation*}
\int_{-\pi}^{\pi}f(y)e^{impy}\ dy=0\quad\text{for all integers}\quad  m;
\end{equation*}
since, furthermore, $f$ is $2\pi$-periodic,  \eqref{41} is also equivalent to 
\begin{equation}\label{44}
 \int_{0}^{2\pi}f(y)e^{impy}\ dy=0\quad\text{for all integers}\quad  m.
\end{equation} 

Next we establish for each fixed integer $m$ the identity 
\begin{equation}\label{45}
 \int_{0}^{2\pi}f(y)e^{impy}\ dy
=\frac{1}{p}\int_0^{2\pi}\sum_{k=0}^{p-1}f\left(\frac{u+2k\pi}{p}\right)e^{imu}\ du.
\end{equation} 
Indeed, we have 
\begin{align*}
  \int_{0}^{2\pi}f(y)e^{impy}\ dy
&=\frac{1}{p}\int_0^{2p\pi}f\left(\frac{z}{p}\right)e^{imz}\ dz\\
&=\frac{1}{p}\sum_{k=0}^{p-1}\int_0^{2\pi}f\left(\frac{u+2k\pi}{p}\right)e^{imu}\ du.
\end{align*}
In view of \eqref{45} the condition \eqref{44} is equivalent to the equality 
\begin{equation*}
 \sum_{k=0}^{p-1}f\left(t+k\frac{2\pi}{p}\right)=0
\end{equation*}
for almost every $t\in (0,2\pi/p)$ or equivalently for almost every $t\in\RR$.
\end{proof}

\begin{remark}
 For $p=2$ the condition \eqref{41} means that $f$ is even with respect to the middle-point of the interval $(0,\pi)$.
\end{remark}

Turning to the proof of the theorem, we will use the results of the preceding section.

\begin{proof}[Proof of the estimate \eqref{19}]
Proceeding in the usual way we have
\begin{align*}
\int_0^T\int_0^{\pi}&\abs{u'(t,\alpha ,x_2)}^2\ dx_2\ dt\\
&=\int_0^T\int_0^{\pi}\abs{\sum_{k_1=1}^{\infty}\sum_{k_2=1}^{\infty}-i\abs{k}\left( a_ke^{i\abs{k}t}-b_{k}e^{-i\abs{k}t}\right)\sin (k_1\alpha)\sin (k_2x_2)}^2\ dx_2\ dt\\
&=\frac{\pi}{2}\sum_{k_2=1}^{\infty}\int_0^T\abs{\sum_{k_1=1}^{\infty}\abs{k}\left( a_ke^{i\abs{k}t}-b_{k}e^{-i\abs{k}t}\right)\sin (k_1\alpha)}^2\ dt\\
&\ge \frac{\pi}{2}\sum_{k_2=1}^{\infty}\left( \frac{2T}{\pi}-\frac{64\pi}{T}\right)\sum_{k_1\ge k_2}\abs{k}^2\left( \abs{a_k}^2 +\abs{b_k}^2 \right)\sin^2 (k_1\alpha)\\
&\qquad\qquad -\frac{\pi}{2}\sum_{k_2=1}^{\infty}\frac{64\pi}{T}\sum_{k_1<k_2}\abs{k}^2\left( \abs{a_k}^2 +\abs{b_k}^2 \right)\sin^2 (k_1\alpha).
\end{align*}

Since the initial data are $p$-symmetric in $x_1$, it follows from the preceding lemma that $a_k=b_k=0$ whenever $\sin^2 (k_1\alpha)=0$. Therefore, denoting by $m_p$ and $M_p$ the minimum and maximum of the nonzero values of $\sin^2 (k_1\alpha)$ when $k_1$ runs over integers (we have to compare only $p-1$ values because of the $p$-periodicity), we conclude that 
\begin{multline}\label{46}
\int_0^T\int_0^{\pi}\abs{u'(t,\alpha ,x_2)}^2\ dx_2\ dt\\
\ge \sum_{k_2=1}^{\infty}\left( T-\frac{32\pi^2}{T}\right)m_p\sum_{k_1\ge k_2}\abs{k}^2\left( \abs{a_k}^2 +\abs{b_k}^2 \right) \\
-\sum_{k_2=1}^{\infty}\frac{32\pi^2}{T}M_p\sum_{k_1<k_2}\abs{k}^2\left( \abs{a_k}^2 +\abs{b_k}^2 \right)  .
\end{multline}

Adding \eqref{46} to \eqref{39} we get
\begin{multline*}
\int_0^T\int_0^{\pi} \abs{u'(t,\alpha ,x_2)}^2\ dx_2 \ dt
+\int_0^T\int_0^{\pi}\int_c^d\abs{u'(t,x_1,x_2)}^2\ dx_2\ dx_1\ dt\\
\ge \left( Tm_p-\frac{32\pi^2m_p}{T}-\frac{16\pi^3}{T}\right)\sum_{k_1\ge k_2}\abs{k}^2\left( \abs{a_k}^2 +\abs{b_k}^2 \right) \\
+\left( Tm_{c,d}-\frac{32\pi^2m_{c,d}}{T}-\frac{32\pi^2M_p}{T}\right)\sum_{k_1<k_2}\abs{k}^2\left( \abs{a_k}^2 +\abs{b_k}^2 \right)  .
\end{multline*}

As usual, using \eqref{37} we conclude that if
\begin{equation*}
T^2>\max\set{32\pi^2+\frac{16\pi^3}{m_p},32\pi^2+\frac{32\pi^2M_p}{m_{c,d}}},
\end{equation*} 
then the estimate \eqref{19} holds with 
\begin{equation*}
c:=\frac{2}{\pi^2T}\min\set{T^2m_p-32\pi^2m_p-16\pi^3,T^2m_{c,d}-32\pi^2m_{c,d}-32\pi^2M_p}.\qedhere
\end{equation*} 
\end{proof}

\begin{proof}[Proof of the estimate \eqref{110}]
Taking the sum of the already established inequalities \eqref{315} and \eqref{46} we have 
\begin{multline*}
\int_0^T\int_0^{\pi} \abs{u'(t,\alpha ,x_2)}^2\ dx_2 \ dt
+\int_0^T\int_0^{\pi}\abs{\partial_{\nu}(t,x_1,0)}^2 \ dx_1\ dt\\
\ge \left( \frac{T}{2}-\frac{16\pi^2}{T}-\frac{32\pi^2M_p}{T}\right)\sum_{k_2> k_1}\abs{k}^2\left( \abs{a_k}^2 +\abs{b_k}^2 \right) \\
\left( Tm_p-\frac{32\pi^2m_p}{T}-\frac{32\pi^2}{T}\right)\sum_{k_2\le k_1}\abs{k}^2\left( \abs{a_k}^2 +\abs{b_k}^2 \right).
\end{multline*}
We conclude that if 
\begin{equation*}
T^2>32\pi^2\max\set{1+2M_p,1+\frac{1}{m_p}},
\end{equation*} 
then the estimate \eqref{110} holds with 
\begin{equation*}
c:=\frac{1}{\pi^2T}\min\set{ T^2 - 32\pi^2 - 64\pi^2M_p ,2T^2m_p- 64\pi^2m_p-64\pi^2}.\qedhere
\end{equation*} 
\end{proof}

\section{Proof of Theorem \ref{t18}}\label{s5}

\begin{proof}[Proof of Theorem \ref{t18}] Since the initial data are doubly symmetric, the solutions satisfy not only the inequality \eqref{46} of the preceding section, but also the analogous inequality obtained by exchanging the role of $x_1$ and $x_2$:
\begin{multline*}
\int_0^T\int_0^{\pi}\abs{u'(t,x_1,\beta)}^2\ dx_1\ dt\\
\ge \sum_{k_1=1}^{\infty}\left( T-\frac{32\pi^2}{T}\right)m_q\sum_{k_2\ge k_1}\abs{k}^2\left( \abs{a_k}^2 +\abs{b_k}^2 \right) \\
-\sum_{k_1=1}^{\infty}\frac{32\pi^2M_q}{T}\sum_{k_2<k_1}\abs{k}^2\left( \abs{a_k}^2 +\abs{b_k}^2 \right),
\end{multline*}
whence
\begin{multline}\label{51}
\int_0^T\int_0^{\pi}\abs{u'(t,x_1,\beta)}^2\ dx_1\ dt\\
\ge \sum_{k_1=1}^{\infty}\left( T-\frac{32\pi^2}{T}\right)m_q\sum_{k_2> k_1}\abs{k}^2\left( \abs{a_k}^2 +\abs{b_k}^2 \right) \\
-\sum_{k_1=1}^{\infty}\frac{32\pi^2M_q}{T}\sum_{k_2\le k_1}\abs{k}^2\left( \abs{a_k}^2 +\abs{b_k}^2 \right).
\end{multline}
Adding \eqref{46} and \eqref{51} and setting 
\begin{equation*}
M_{p,q}:=\max\set{m_p+M_q,m_q+M_p}
\end{equation*}
for brevity, we obtain that
\begin{multline*}
\int_0^T\int_0^{\pi}\abs{u'(t,\alpha,x_2)}^2\ dx_2\ dt
+\int_0^T\int_0^{\pi}\abs{u'(t,x_1,\beta)}^2\ dx_1\ dt\\
\ge \left( T-\frac{32\pi^2M_{p,q}}{T}\right)\sum_{k_1=1}^{\infty}\sum_{k_2=1}^{\infty}\abs{k}^2\left( \abs{a_k}^2 +\abs{b_k}^2 \right).
\end{multline*}
Hence for $T^2>32\pi^2M_{p,q}$ the estimate \eqref{111} holds with 
\begin{equation*}
c:=\frac{2}{\pi^2T}\left( T^2-32\pi^2M_{p,q}\right).\qedhere
\end{equation*} 
\end{proof}

\begin{remark}
The case $\alpha=\beta=\pi/2$ is particularly simple because $m_p=M_p=m_q=M_q=1$ and therefore $M_{p,q}=2$: if $T>8\pi$ then the estimate \eqref{111} holds with 
\begin{equation*}
c:=\frac{2\left( T^2-64\pi^2\right)}{\pi^2T}.
\end{equation*} 
\end{remark}

\end{document}